\newtheorem{theorem}{Theorem}[section]
\newtheorem{proposition}[theorem]{Proposition}
\theoremstyle{definition}
\newtheorem{algorithm}[theorem]{Algorithm}
\newtheorem{remark}[theorem]{Remark}
\numberwithin{equation}{section}
\newcommand{\eps} {\varepsilon}
\renewcommand{\epsilon}{\varepsilon}
\renewcommand{\phi}{\varphi}
\newcommand{\cell}{I}
\newcommand{\R}{\mathbb{R}}
\newcommand{\N}{\mathbb{N}}
\newcommand{\Z}{\mathbb{Z}}
\newcommand{\Dx}{{\Delta x}}
\newcommand{\Dt}{{\Delta t}}
\newcommand{\sgn}{{\rm sgn}}
\newcommand{\TV}{{\rm TV}}
\newcommand{\hf}{{\unitfrac{1}{2}}}
\newcommand{\thf}{{\unitfrac{3}{2}}}
\newcommand{\iphf}{{i+\hf}}
\newcommand{\imhf}{{i-\hf}}
\newcommand{\jphf}{{j+\hf}}
\newcommand{\jmhf}{{j-\hf}}
\newcommand{\jmpr}[1]{\langle\hspace*{-0.25em}\langle #1 \rangle\hspace*{-0.25em}\rangle}
\newcommand{\jmp}[1]{\lbrack\hspace*{-0.14em}\lbrack\hspace{0.05em} #1 \hspace{0.05em}\rbrack\hspace*{-0.14em}\rbrack}
\newcommand{\avg}[1]{\overline{#1}}
\newcommand{\avgv}{\avg{v}}
\newcommand{\mean}[1]{{\{#1\}}}
\newcommand{\recon}{\mathcal{R}}
\newcommand{\ind}{\mathbbm{1}} 
\renewcommand{\leq}{\leqslant}
\renewcommand{\geq}{\geqslant}
\def\Xint#1{\mathchoice
{\XXint\displaystyle\textstyle{#1}} 
{\XXint\textstyle\scriptstyle{#1}} 
{\XXint\scriptstyle\scriptscriptstyle{#1}} 
{\XXint\scriptscriptstyle\scriptscriptstyle{#1}} 
\!\int}
\def\XXint#1#2#3{{\setbox0=\hbox{$#1{#2#3}{\int}$ }
\vcenter{\hbox{$#2#3$ }}\kern-.57\wd0}}
\def\intavg{\Xint-}
\begin{document}

\chapter*{Stability properties of the ENO method}
\textbf{\large
Ulrik Skre Fjordholm\footnote{Department of Mathematical Sciences, NTNU, 7491 Trondheim, Norway. \newline Email: ulrik.fjordholm@math.ntnu.no}$^,$\footnote{Research supported in part by the grant \textit{Waves and Nonlinear Phenomena} (WaNP) from the Research Council of Norway.}
}

{\small
\textbf{Abstract} 
We review the currently available stability properties of the ENO reconstruction procedure, such as its monotonicity and non-oscillatory properties, the sign property, upper bounds on cell interface jumps and a total variation-type bound. We also outline how these properties can be applied to derive stability and convergence of high-order accurate schemes for conservation laws.
}

\section{Introduction}
The ENO (Essentially Non-Oscillatory) reconstruction method is a method of recovering---to a high degree of accuracy---a function $v$, given only discrete information such as a finite number of point values $v_i = v(x_i)$ or local averages
\[
\avgv_i = \intavg_{\cell_i} v(x)\ dx, \qquad i\in\Z
\]
over \textit{cells} $I_i = [x_\imhf, x_\iphf)$. (Here and below we will denote $\intavg_\cell = \frac{1}{|\cell|}\int$.) The method was first developed as a means of increasing the order of accuracy of numerical methods for hyperbolic conservation laws. Solutions of these types of PDEs are at best \textit{piecewise} smooth, and can have large jump discontinuities. The ENO method accomplishes the feat of approximating $v$ to a high degree of accuracy in smooth parts, while avoiding ``Gibbs-like'' oscillations near the discontinuities. The purpose of this paper is to review the currently known stability properties of the ENO method, and the application of these to numerical methods for hyperbolic conservation laws.

The ENO reconstruction method was originally developed by Ami Harten~\cite{Har86} and further developed and analyzed by Harten, Osher, Engquist and Chakravarthy in a series of papers \cite{HO87,HEOC86,HEOC87}. Since then it has been generalized and applied to a number of areas. In this paper we will concentrate on the one-dimensional version of the ENO reconstruction method, and its application to approximate one-dimensional scalar conservation laws. Thus, we leave out a large body of work on multi-dimensional generalizations of ENO, related ``ENO-type'' reconstruction methods, and applications of ENO to systems of conservation laws, as well as other fields such as data compression/representation and image analysis/reconstruction. Multi-dimensional ENO methods were introduced by Shu and Osher on Cartesian (tensor-product) meshes \cite{SO88}, and generalized to unstructured (triangular) meshes by Harten and Chakravarthy \cite{HC91} and by Abgrall and Lafon \cite{AL93} (see also \cite{Abg94}). Related ``ENO-type'' methods include the highly successful Weighted ENO method \cite{LOC94,JS96}, biased ENO \cite{Shu90}, ENO-SR (subcell resolution) \cite{Har89} and its multi-dimensional generalization GENO (geometric ENO) \cite{SKS97}, and ENO-EA (edge adapted) \cite{ACDDM08}. For applications of ENO apart from conservation laws we mention in particular Harten's work on multiresolution methods \cite{Har96}; see also \cite{AD00}.

%

Here follows an outline of the rest of the paper. In Section \ref{sec:enorecon} we motivate and describe the ENO reconstruction method. In Section \ref{sec:conslaws} we briefly describe the application of the ENO method to (scalar) conservation laws; we show that the resulting second-order accurate scheme is convergent; and we derive a list of \textit{a priori} bounds that imply convergence of (one class of) higher-order ENO-type schemes. Section \ref{sec:enostab} is the main section of the paper. We start by listing some immediate stability properties of the ENO method, and move on to describing some of the more non-trivial properties such as the sign property, upper bounds on jumps and the ``essentially non-oscillatory'' property.

We have attempted to make this paper as self-contained as possible. In particular, Sections \ref{sec:enorecon} and \ref{sec:enostab} should be accessible also to readers without a background in PDEs.

\section{The ENO reconstruction method}\label{sec:enorecon}
For the sake of completeness we describe here the ENO reconstruction method. We refer to the review article by Shu and Zhang \cite{SZ16} for further details.

Let us fix a partition $(\cell_i)_{i\in\Z}$ of the real line, where each \textit{cell} $\cell_i$ is an interval $\cell_i = [x_\imhf, x_\iphf)$ of length $\Dx_i = x_\iphf-x_\imhf$, bounded from above by $\Dx=\max_i\Dx_i$. Let $(\avgv_i)_{i\in\Z}\subset\R$ be a given collection of numbers, which we interpret as the cell averages of some unknown function $v$,
\begin{equation}\label{eq:avgvdef}
\avgv_i = \intavg_{\cell_i} v(x)\ dx.
\end{equation}
The ENO (Essentially Non-Oscillatory) reconstruction method \cite{Har86,HEOC87} aims to \emph{reconstruct} $v$ by producing a collection of $(k-1)$th order polynomials $p_i = p_i(x)$ which approximate $v$ to $k$th order:
\begin{equation}\label{eq:kthorderapprox}
p_i(x) = v(x) + e(x)\Dx_i^{k} \qquad \forall\ x\in\cell_i,
\end{equation}
where $e(x)$ denotes the leading-order error term. The reconstruction is required to conserve mass, in the sense that
\begin{equation}
\intavg_{\cell_i} p_i(x)\ dx = \avgv_i \qquad \forall\ i\in\Z,
\end{equation}
and is required to be as ``non-oscillatory'' as possible.

The properties of accuracy and mass conservation are automatically satisfied if $p_i$ interpolates the cell average values $\avgv_j$ over any of the $k$ \emph{stencils}
\[
\{s,\dots,s+k-1\}, \qquad i-k+1 \leq s \leq i.
\]
Thus, $p_i$ is defined as the unique $(k-1)$th order polynomial which satisfies
\begin{equation}\label{eq:cellavginterp}
\intavg_{\cell_j} p_i(x)\ dx = \avgv_j \qquad \text{for } j=s,\dots,s+k-1,
\end{equation}
for some integer $s = s_i\in\{i-k+1,\dots,i\}$ called the \emph{stencil index}. The problem of finding $p_i$ satisfying \eqref{eq:cellavginterp} is a somewhat nonstandard interpolation problem, and Harten \cite{Har86} suggested two approaches. 

In the \textit{reconstruction via deconvolution} (RD) approach, it is observed that \eqref{eq:avgvdef} is a convolution of $v$ with the indicator function over $\cell_i$. Taylor expanding $v$ and comparing with \eqref{eq:cellavginterp} results in an upper triangular linear system for $p_i$. 

In the \textit{reconstruction via primitive function} (RP) approach we define the primitive of $v$,
\begin{equation}\label{eq:primitive}
V(x) = \int_{-\infty}^x v(x)\ dx
\end{equation}
(the lower limit of this integral is irrelevant), and observe that $V$ is precisely known at every cell interface,
\[
V(x_\iphf) = \sum_{j\leq i} \Dx_j\avgv_j.
\]
If we let $P_i$ be the unique $k$th order polynomial which interpolates $V$ over the points $\{x_{s-\hf}, \dots, x_{s+k-\hf}\}$, then the $(k-1)$th order polynomial $p_i(x) = \frac{d}{dx}P_i(x)$ satisfies \eqref{eq:cellavginterp}.

The RD approach requires a uniform mesh (i.e.\ $\Dx\equiv$ const.), whereas the RP approach works for any (one-dimensional) mesh. Even on a uniform mesh, the RD and RP approaches are \textit{not} equivalent, i.e.\ they yield distinct reconstructions $p_i$. We are unaware of any further work on the RD methodology beyond the original papers by Harten et al.\ \cite{HO87,HEOC86,HEOC87}, and we will concentrate on RP for the remainder of this paper. (See also Remark \ref{rem:rdsignprop}.)

\subsubsection*{Choosing the stencil index}
The algorithm to select the stencil index $s_i$ is what characterizes the ENO reconstruction procedure. A naive choice of the stencil index could be the all-upwind or all-downwind stencils $s_i\equiv i-k+1$ or $s_i\equiv i$; however, given the possible non-smoothness or discontinuity of $v$, these choices would lead to ``Gibbs phenomena''---large oscillations in non-smooth regions. 

Harten \cite{Har86} proposed an iterative, data-dependent algorithm to compute $s_i$. The algorithm is based upon the divided differences of $V$, defined as
\[
\begin{cases}
V[x_\iphf] = V(x_\iphf) \\
V[x_\imhf, \dots, x_\jphf] = \frac{V[x_\iphf, \dots, x_\jphf] - V[x_\imhf, \dots, x_\jmhf]}{x_\jphf - x_\imhf} & \forall\ i<j.
\end{cases}
\]
Starting with the stencil $\{x_\imhf, x_\iphf\}$, the ENO stencil selection procedure adds either the left or right point $x_{i-\thf}$ or $x_{i+\thf}$, depending on which of the divided differences $V[x_{i-\thf},x_{i-\hf},x_\iphf]$ or $V[x_\imhf,x_\iphf,x_{i+\thf}]$ is the smallest. This process is then iterated, ending up with an interpolation stencil $\{x_{s_i-\hf}, \dots, x_{s_i+k+\hf}\}$ for some $s_i\in\{i-k+1,\dots,i\}$. Recalling that the $k$th divided difference of $V$ is an approximation of the $(k-1)$th derivative of $v$,
\[
V[x_{i-\hf}, \dots, x_{i+k-\hf}] = \frac{V^{(k)}(\xi)}{k!} = \frac{v^{(k-1)}(\xi)}{k!}, \qquad \xi\in[x_\imhf, x_{i+k-\hf}],
\]
we see that the ENO procedure iteratively adds a new point to the interpolation stencil ``in the direction of smoothness''.


Since $V[x_\imhf,x_\iphf] = \avg{v}_i$, we can write
\[
V[x_\imhf, \dots, x_\jphf] = [\avgv_i,\dots,\avgv_j] \qquad \forall\ i\leq j
\]
where the ``cell-average divided differences'' are defined as
\begin{equation}\label{eq:avgdivdiff}
\begin{cases}
[\avgv_{i}] = \avgv_i \\
[\avgv_{i},\dots,\avgv_{j}] = \frac{[\avgv_{i+1},\dots,\avgv_{j}] - [\avgv_{i},\dots,\avgv_{j-1}]}{x_{j+\hf}-x_{i-\hf}} & \forall\ i<j.
\end{cases}
\end{equation}
We summarize the ENO procedure using this notation.

\begin{framed}
\begin{algorithm}\label{alg:eno}
\textbf{(ENO Stencil Selection Procedure)}
\begin{algorithmic}
\State{$s_i^1 = 0$}
\For{$\ell=1,\dots,k-1$}
	\If{$\left|\bigl[\avgv_{s_i^\ell-1}, \dots, \avgv_{s_i^\ell+\ell-1}\bigr]\right| < \left|\bigl[\avgv_{s_i^\ell}, \dots, \avgv_{s_i^\ell+\ell}\bigr]\right|$}
		\State{$s_i^{\ell+1} = s_i^\ell-1$}
	\Else
		\State{$s_i^{\ell+1} = s_i^\ell$}
	\EndIf
\EndFor
\State{$s_i = s_i^k$}
\State {Let $P_i$ interpolate $V$ over $\{x_{s_i-\hf},\dots,x_{s_i+k-\hf}\}$}
\State {Define $p_i(x) = \frac{d}{dx}P_i(x)$}
\end{algorithmic}
\end{algorithm}
\end{framed}

The implications of the ENO stencil selection procedure are easiest to see with the Newton form of the interpolating polynomial $P_i$. It is straightforward to show by induction that the Newton form of $P_i$ can be expressed as
\[
P_i(x) = \sum_{\ell=0}^k V[x_{s_i^\ell-\hf}, \dots, x_{s_i^\ell+\ell-\hf}] \prod_{m=0}^{\ell-1}(x-x_{s_i^{\ell-1}+m-\hf}),
\]
where we have defined $s_i^{-1}=s_i^0 = i$. After differentiating and using the notation \eqref{eq:avgdivdiff} we get
\begin{equation}\label{eq:enoexpression}
\begin{split}
p_i(x) &= \sum_{\ell=1}^k V[x_{s_i^\ell-\hf}, \dots, x_{s_i^\ell+\ell-\hf}] \sum_{n=0}^{\ell-1}\prod_{\substack{m=0\\m\neq n}}^{\ell-1}(x-x_{s_i^{\ell-1}+m-\hf}) \\*
&= \sum_{\ell=1}^{k} [\avgv_{s_i^{\ell}}, \dots, \avgv_{s_i^{\ell}+\ell-1}] \sum_{n=0}^{\ell-1}\prod_{\substack{m=0\\m\neq n}}^{\ell-1}(x-x_{s_i^{\ell-1}+m-\hf})
\end{split}
\end{equation}
(see also \cite[p.\ 81]{Har86}). Thus, the ENO stencil selection procedure chooses each index $s_i^\ell$ so that the above coefficients $[\avgv_{s_i^{\ell}}, \dots, \avgv_{s_i^{\ell}+\ell-1}]$ are as small as possible, thereby obtaining the least oscillatory polynomial possible.

Note that both the ENO stencil selection procedure and the formula for $p_i$ can be written completely in terms of the divided differences of $\avgv$. Thus, it is not necessary to compute the primitive $V$ or its divided differences.

\begin{remark}\label{rem:pointeno}
There is also a point-value version of the ENO reconstruction method. Given the point-values $v_i=v(x_i)$ of some function $v$, this method employs a similar algorithm to obtain a reconstruction $p_i(x) = v(x) + O(\Dx_i^k)$. The reconstruction $p_i$ is given by the $(k-1)$th order polynomial interpolating $(v_j)_{j\in\Z}$ over the points $x_{s_i},\dots,x_{s_i+k-1}$, where $s_i$ is obtained by replacing every occurence of $\avgv_j$ in Algorithm \ref{alg:eno} by $v_j$. See \cite{SO88} for further details and \cite{Fjo13,FMT12a} for a stability analysis.
\end{remark}

\section{Application to conservation laws}\label{sec:conslaws}
The ENO method was originally developed as a means of increasing the order of accuracy of finite volume schemes for hyperbolic conservation laws. We consider here only one-dimensional, scalar conservation laws
\begin{equation}\label{eq:cl}
\begin{split}
\partial_t u + \partial_x f(u) = 0 \\
u(x,0) = u_0(x).
\end{split}
\end{equation}
To establish the notation and some useful identities, we briefly review this setting in Section \ref{sec:fvm}. We refer to the article by Shu and Zhang \cite{SZ16} for further details. In Section \ref{sec:tvdeno} we see that the second-order ENO method results in a TVD, convergent finite volume scheme for scalar conservation laws. In Section \ref{sec:convhighorder} we review one approach to obtaining convergent higher-order accurate schemes. 

Below we use the notation from Section \ref{sec:enorecon}. Furthermore, we denote
\begin{equation}
\jmp{\avgv}_\iphf = \avgv_{i+1}-\avgv_i, \qquad \mean{\avgv}_\iphf = \frac{\avgv_i+\avgv_{i+1}}{2}.
\end{equation}

\subsection{Finite volume methods}\label{sec:fvm}
A (semi-discrete) finite volume method for \eqref{eq:cl} aims to compute an approximation of the cell averages
\[
\avgv_i(t) \approx \intavg_{\cell_i} u(x,t)\,dx \qquad \forall\ t\geq0
\]
of the exact (entropy) solution of \eqref{eq:cl}. A consistent, conservative finite volume method for \eqref{eq:cl} is then of the form
\begin{equation}\label{eq:fvmsd}		
\frac{d}{dt}\avgv_i(t) = - \frac{1}{\Dx_i}\bigl(F_\iphf - F_\imhf\bigr)
\end{equation}
for some $F_\iphf = F\bigl(\avgv_{i-m+1},\dots,\avgv_{i+m}\bigr)$, and $F$ is a numerical flux function such as the Godunov, Lax--Friedrichs or Engquist--Osher fluxes. One class of (formally) high-order accurate schemes is obtained by letting 
\begin{equation}\label{eq:musclflux}
F_\iphf = F\bigl(v_\iphf^-,\, v_\iphf^+\bigr)
\end{equation}
for some monotone flux $F$. Here, $v_\iphf^\pm$ are the reconstructed cell interface values
\begin{equation}\label{eq:reconcellinterface}
v_\iphf^- = p_i(x_\iphf,t), \quad v_\iphf^+ = p_{i+1}(x_\iphf,t), \quad p(x,t) = \recon(\avgv(t))(x)
\end{equation}
for some reconstruction operator $\recon$ such as ENO. 

To obtain a fully discrete method, we discretize the temporal domain $t\in[0,\infty)$ into discrete points $t^n = n\Dt$ for some $\Dt>0$ (which we for simplicity assume is constant), and the aim is to approximate
\[
\avgv_i^n \approx \intavg_{\cell_i} u(x,t^n)\,dx \qquad \forall\ i\in\Z.
\]
An explicit, fully discrete finite volume method for \eqref{eq:cl} is then of the form
\begin{equation}\label{eq:fvm}		
\avgv_i^{n+1} = \avgv_i^n - \frac{\Dt}{\Dx_i}\bigl(F_\iphf^n - F_\imhf^n\bigr)
\end{equation}
for some $F_\iphf^n = F\bigl(\avgv_{i-m+1}^n,\dots,\avgv_{i+m}^n\bigr)$. This scheme is \emph{total variation diminishing} (TVD) if
\begin{equation}\label{eq:tvd}
\TV(\avgv^{n+1}) \leq \TV(\avgv^n),
\end{equation}
so-called after Harten \cite{Har83}.

The scheme \eqref{eq:fvm} can be viewed as a (first-order accurate) Forward Euler discretization of \eqref{eq:fvmsd} (see \cite[Section II.3.3]{GR91} for a rigorous derivation; cf.\ also \cite[p.\ 352]{HEOC86}). Higher-order accurate methods can be obtained using multi-step methods or Strong Stability Preserving (SSP) Runge--Kutta methods \cite{GST01}, which consist of convex combinations of \eqref{eq:fvm}.

\subsection{TVD ENO schemes}\label{sec:tvdeno}
Consider now the (formally) second-order accurate scheme \eqref{eq:fvm} with a flux \eqref{eq:musclflux} using a second-order reconstruction method. Any second-order reconstruction $(p_i)_{i\in\Z}$ of cell averages $(\avgv_i)_{i\in\Z}$ must necessarily be of the form
\begin{equation}\label{eq:lininterp}
p_i(x) = \avgv_i + \sigma_i(x-x_i)
\end{equation}
where $\sigma_i\in\R$ is the slope of $p_i$. This slope is commonly written in the \emph{slope limited} form
\begin{equation}\label{eq:slopedef}
\sigma_i = \phi(\theta_i^+)\jmp{\avgv}_\iphf, \qquad \theta_i^+ = \frac{\jmp{\avgv}_\imhf}{\jmp{\avgv}_\iphf}
\end{equation}
for some $\phi : \R\to\R$ called a \emph{slope limiter}. 
Using Harten's work \cite{Har83}, Sweby \cite{Swe84} showed that if the slope limiter satisfies
\begin{equation}\label{eq:swebycondition}
\left|\phi(\theta_1) - \frac{\phi(\theta_2)}{\theta_2}\right| \leq 2 \qquad \forall\ \theta_1,\theta_2\in\R
\end{equation}
then the explicit discretization \eqref{eq:fvm} is both total variation diminishing (TVD) and uniformly bounded, so the computed solution satisfies
\[
\TV(\avgv^n) \leq \TV(\avgv^0), \qquad \|\avgv^n\|_{L^\infty} \leq \|\avgv^0\|_{L^\infty} \qquad \forall\ n\in\N.
\]
As a consequence, there is a subsequence $\Dt_m, \Dx_m \to 0$ for which the computed solutions converge towards a weak solution.

It is not hard to see that the second-order ENO reconstruction can be written as \eqref{eq:lininterp}, \eqref{eq:slopedef} with the slope limiter
\begin{equation}\label{eq:enolimiter}
\phi(\theta) = \begin{cases}
\theta & \text{if } |\theta|<1 \\
1 & \text{if } |\theta|\geq1.
\end{cases}
\end{equation}
Although this limiter does not lie in the ``TVD region'' introduced by Sweby \cite{Swe84}, it \textit{does} satisfy \eqref{eq:swebycondition}. Therefore, the scheme \eqref{eq:fvm}, \eqref{eq:musclflux} using second-order ENO reconstruction is both TVD and uniformly bounded, and hence converges (subsequentially) towards a weak solution.

\subsection{Convergence of high-order schemes}\label{sec:convhighorder}
A uniform bound on the total variation of a sequence of approximate solutions---such as the bound \eqref{eq:tvd} provided by TVD schemes---prevents the buildup of high-frequency oscillations, a necessary requirement for the strong convergence of the sequence. However, it is well-known that any TVD scheme for \eqref{eq:cl} is at most second-order accurate when measured in $L^1$. Thus, any proof of stability or convergence of higher (than second) order accurate schemes must necessarily relax the TVD requirement, while still preventing high-frequency oscillations.

We present here one class of convergent, high-order accurate schemes, the so-called TECNO schemes \cite{Fjo13,FMT12b}. As a motivation we first derive the necessary \textit{a priori} bounds for a parabolic regularization of \eqref{eq:cl}, which can be thought of as the effective (modified) equation of the numerical scheme. We then perform the analogous computations for the TECNO schemes.

\subsubsection{Motivation}
Consider the following regularization of \eqref{eq:cl}:
\begin{equation}\label{eq:clreg}
\begin{split}
\partial_t v^\eps &+ \partial_x f(v^\eps) = \eps \partial_{xx}v^\eps \\
& v^\eps(x,0) = v_0^\eps(x)
\end{split}
\end{equation}
(where $v_0^\eps$ converges to $u_0$ as $\eps\to0$). The term $\eps\partial_{xx}v^\eps$ can be thought of as the numerical viscosity of a numerical scheme, and $\eps \sim \Dx^k$, where $k$ is the order of accuracy of the method. Multiplying \eqref{eq:clreg} by $2v^\eps$ we obtain
\begin{equation}\label{eq:energyregular}
\partial_t (v^\eps)^2 + \partial_x q(v^\eps) = \eps\partial_{xx}(v^\eps)^2 - 2\eps(\partial_xv^\eps)^2,
\end{equation}
where $q$ satisfies $q'(u)=2uf'(u)$ for all $u\in\R$. Integrating \eqref{eq:energyregular} over $x\in\R$, $t\in[0,T]$ gives
\begin{equation}\label{eq:energyregintegrated}
\int_\R v^\eps(x,T)^2\ dx = \int_\R u_0^\eps(x)^2\ dx - 2\eps \int_0^T\int_\R(\partial_x v^\eps)^2\ dxdt.
\end{equation}
Thus, we have the two bounds
\begin{subequations}\label{eq:aprioribounds}
\begin{equation}
\|v^\eps(T)\|_{L^2(\R)} \leq \|u_0\|_{L^2(\R)}
\end{equation}
\begin{equation}\label{eq:regwtvbound}
2\eps\int_0^T\int_\R(\partial_x v^\eps)^2\ dxdt \leq \|u_0\|_{L^2(\R)}^2
\end{equation}
\end{subequations}
for all $\eps>0$, i.e., a uniform $L^2$ bound and a ``weak TV bound''. From these, compensated compactness techniques can be used to show that a subsequence $v^{\eps'}$ converges to a weak solution of \eqref{eq:cl} as $\eps'\to0$. Since the second term on the right-hand side of \eqref{eq:energyregular} is non-positive, we find that any strong limit $u=\lim_{\eps'\to0} v^{\eps'}$ satisfies the entropy condition
\begin{equation}\label{eq:entrcond}
\partial_t u^2 + \partial_x q(u) \leq 0.
\end{equation}
We conclude that the \textit{whole} sequence $(v^\eps)_{\eps>0}$ converges strongly to the (unique) entropy solution of \eqref{eq:cl}.

\subsubsection{TECNO schemes}\label{sec:tecno}
We consider now the semi-discrete finite volume method \eqref{eq:fvmsd} with a numerical flux function of the form
\begin{equation}\label{eq:entrstabflux}
F_\iphf = F^*_\iphf - c_\iphf\jmpr{v}_\iphf.
\end{equation}
Here, $\jmpr{v}_\iphf = v_\iphf^+ - v_\iphf^-$ is the cell interface jump in the reconstructed values (cf.\ \eqref{eq:reconcellinterface}) for some reconstruction operator $\recon$, to be determined. The diffusion constant $c_\iphf$ is some number satisfying $c_{\max}\geq c_\iphf\geq c_{\min}>0$, and $F^*$ is a Lipschitz continuous numerical flux, to be determined. Note that if the reconstructed values satisfy, say,
\begin{equation}\label{eq:upperjmpbound}
|\jmpr{v}_\iphf| \leq C|\jmp{\avgv}_\iphf|
\end{equation}
for some $C>0$ independent of $v$, then $F$ is Lipschitz continuous---a natural assumption in the convergence analysis of finite volume schemes. 

Multiplying \eqref{eq:fvmsd} by $2\avgv_i(t)$ we obtain
\begin{align*}
\frac{d}{dt}\avgv_i^2 + 2\avgv_i\frac{F^*_\iphf - F^*_\imhf}{\Dx_i} 
&= 2\avgv_i\frac{c_\iphf\jmpr{v}_\iphf - c_\imhf\jmpr{v}_\imhf}{\Dx_i} \\
&= 2\frac{c_\iphf\mean{v}_\iphf\jmpr{v}_\iphf - c_\imhf\mean{v}_\imhf\jmpr{v}_\imhf}{\Dx_i} \\
&\quad - \frac{c_\iphf\jmp{\avgv}_\iphf\jmpr{v}_\iphf + c_\imhf\jmp{\avgv}_\imhf\jmpr{v}_\imhf}{\Dx_i}.
\end{align*}
\textit{Assuming} that we can write $2\avgv_i(F^*_\iphf - F^*_\imhf) = (Q^*_\iphf - Q^*_\imhf)$ (as in the step from \eqref{eq:clreg} to \eqref{eq:energyregular}) for some ``numerical entropy flux $Q^*$'', we can define $Q_\iphf = Q^*_\iphf - 2c_\iphf\mean{\avgv}_\iphf\jmpr{v}_\iphf$ and obtain
\begin{equation}\label{eq:tecnoenergyestimate}
\frac{d}{dt}\avgv_i^2 + \frac{Q_\iphf - Q_\imhf}{\Dx_i} =  -\frac{c_\iphf\jmp{\avgv}_\iphf\jmpr{v}_\iphf + c_\imhf\jmp{\avgv}_\imhf\jmpr{v}_\imhf}{\Dx_i}.
\end{equation}
Summing over $i\in\Z$ and integrating over $t\in[0,T]$, we get
\begin{equation}
\sum_{i\in\Z} \avgv_i(T)^2\Dx_i = \sum_{i\in\Z}\avgv_i(0)^2\Dx_i - 2\int_0^T\sum_{i\in\Z}c_\iphf\jmp{\avgv}_\iphf\jmpr{v}_\iphf\, dt
\end{equation}
(compare with \eqref{eq:energyregintegrated}). Assuming now that 
\begin{equation}\label{eq:signprop}
\jmp{\avgv}_\iphf\jmpr{v}_\iphf \geq 0 \qquad \forall\ i\in\Z,
\end{equation}
i.e.\ that the jumps $\avgv_{i+1}-\avgv_i$ and $v_\iphf^+ - v_\iphf^-$ have the same sign, we can conclude that
\begin{subequations}
\begin{equation}
\|v_\Dx(T)\|_{L^2(\R)} \leq \|v_\Dx(0)\|_{L^2(\R)},
\end{equation}
\begin{equation}\label{eq:wrecontvbound}
2\int_0^T\sum_{i\in\Z}c_\iphf\jmp{\avgv}_\iphf\jmpr{v}_\iphf\,dt \leq \|v_\Dx(0)\|_{L^2(\R)}^2
\end{equation}
\end{subequations}
(compare with \eqref{eq:aprioribounds}). The property \eqref{eq:signprop} also ensures that the right-hand side of \eqref{eq:tecnoenergyestimate} is non-positive, so that
\[
\frac{d}{dt}\avgv_i^2 + \frac{Q_\iphf - Q_\imhf}{\Dx_i} \leq 0
\]
(compare with \eqref{eq:entrcond}), i.e.\ a discrete entropy inequality is satisfied.

The bound \eqref{eq:wrecontvbound} is not quite a weak TV bound like \eqref{eq:regwtvbound}---for this we would need a bound of the form
\begin{equation}\label{eq:wtvbound}
\int_0^T\sum_{i\in\Z}|\jmp{\avgv}_\iphf|^p\,dt \leq C
\end{equation}
for some $p\geq1$ and $C>0$ independent of $\Dx$. 

We have thus arrived at a list of properties which enable a convergence proof of the finite volume method \eqref{eq:fvm}: The upper bound on reconstructed jumps \eqref{eq:upperjmpbound}, the \textit{sign property} \eqref{eq:signprop}, and the ``weak TV bound'' \eqref{eq:wtvbound}. 

The sign property and the upper bound have been proven for the ENO reconstruction method and are discussed in Sections \ref{sec:signprop} and \ref{sec:upperjmpbound}, respectively. For $k=2$ it has been proven---and conjectured for $k>2$---that the ``reconstructed TV bound'' \eqref{eq:wrecontvbound} implies the ``weak TV bound'' \eqref{eq:wtvbound}. This is discussed in Section \ref{sec:enoconjecture}. We refer to this conjecture as the \textit{ENO TV conjecture}.

In \cite{Fjo13,FMT12b} the authors constructed schemes of the form \eqref{eq:fvmsd}, \eqref{eq:entrstabflux} which uses the ENO reconstruction method---the so-called \textit{TECNO schemes}. We summarize the main convergence theorem here and refer to \cite{Fjo13} for the proof.

\begin{theorem}
For every $k$ for which the ENO TV conjecture holds, we have the following. If the approximate solution computed by the $k$th order TECNO method is $L^\infty$-bounded, then the sequence of approximate solutions converges to the entropy solution of \eqref{eq:cl} as $\Dx\to0$.
\end{theorem}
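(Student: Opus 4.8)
The plan is to carry out, at the discrete level, the compensated-compactness argument already sketched above for the parabolic regularization \eqref{eq:clreg}, using the ENO TV conjecture to supply the one estimate that is otherwise missing. Concretely, I would first assemble the three \emph{a priori} bounds isolated at the end of Section~\ref{sec:tecno} — the upper bound on reconstructed jumps \eqref{eq:upperjmpbound}, the sign property \eqref{eq:signprop}, and the weak TV bound \eqref{eq:wtvbound} — and then feed them into the standard convergence machinery for scalar conservation laws, exactly paralleling the passage from \eqref{eq:clreg} to the bounds \eqref{eq:aprioribounds} to the entropy solution carried out in the motivation above.

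The first step is to collect the bounds. The $L^\infty$-bound on the computed solution $v_\Dx$ is the hypothesis. Since the TECNO flux \eqref{eq:entrstabflux} is built from the ENO reconstruction, the sign property \eqref{eq:signprop} holds (Section~\ref{sec:signprop}) and the reconstructed interface jumps obey \eqref{eq:upperjmpbound} (Section~\ref{sec:upperjmpbound}); combined with the $L^\infty$-bound, the latter controls $|\jmpr{v}_\iphf|$ by $|\jmp{\avgv}_\iphf|$, makes the numerical flux Lipschitz, and bounds the numerical viscosity uniformly. Inserting the sign property into the energy identity \eqref{eq:tecnoenergyestimate} and summing over $i\in\Z$, precisely as in Section~\ref{sec:tecno}, yields a uniform $L^2$-bound on $v_\Dx(t)$, the reconstructed TV bound \eqref{eq:wrecontvbound}, and the discrete entropy inequality $\frac{d}{dt}\avgv_i^2 + \Dx_i^{-1}(Q_\iphf-Q_\imhf)\leq 0$. (For the fully discrete scheme the time stepping is an SSP Runge--Kutta method, i.e.\ a convex combination of forward-Euler updates, so these bounds are inherited up to the usual $O(\Dt)$ corrections controlled under a CFL condition.) Now I invoke the ENO TV conjecture: since it holds for the present $k$ by hypothesis, the reconstructed TV bound \eqref{eq:wrecontvbound} upgrades to the genuine weak TV bound \eqref{eq:wtvbound}, $\int_0^T\sum_{i\in\Z}|\jmp{\avgv}_\iphf|^p\,dt\leq C$ with $p\geq1$ and $C$ independent of $\Dx$ — the discrete analogue of \eqref{eq:regwtvbound}, which rules out the buildup of high-frequency oscillations.

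The second step is to pass to the limit. The $L^\infty$-bound produces a Young measure $\nu_{x,t}$ associated with a subsequence of $(v_\Dx)$. Using the weak TV bound \eqref{eq:wtvbound} to control the consistency error and the entropy production in $H^{-1}_\loc$ — via the standard Murat-type splitting into a part bounded in the space of measures and a part compact in some $W^{-1,p}_\loc$ — both the conservative form \eqref{eq:fvmsd} and the discrete entropy inequality pass to the limit. Applying the div--curl lemma to the entropy pair $\bigl(\avgv^2,\,q(\avgv)\bigr)$ against $\bigl(\avgv,\,f(\avgv)\bigr)$ then imposes a Tartar commutation relation on $\nu_{x,t}$, and reducing $\nu_{x,t}$ to a Dirac mass — equivalently, upgrading weak to strong $L^1_\loc$ convergence — is the crux of this step: since TECNO controls only the single square entropy, one needs here either genuine nonlinearity of $f$ or the sharper argument of \cite{Fjo13,FMT12b}. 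Once strong convergence is in hand, the limit is a weak solution of \eqref{eq:cl} satisfying the entropy inequality, hence by uniqueness of entropy solutions it is \emph{the} entropy solution; as the limit is then independent of the subsequence, the whole family converges as $\Dx\to0$.

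The main obstacle is that everything beyond the collection of bounds is conditional on the ENO TV conjecture, which is proved only for $k=2$; for $k>2$ this is precisely the missing ingredient, and the theorem is stated accordingly. Among the steps one can actually carry out, the most delicate is the reduction of the Young measure to a point mass from a single entropy inequality, which is where the substantive analysis of \cite{Fjo13} is concentrated.
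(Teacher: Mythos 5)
Your proposal follows essentially the same route the paper takes: assemble the $L^\infty$ hypothesis, the sign property \eqref{eq:signprop}, the jump bound \eqref{eq:upperjmpbound} and the conjecture-supplied weak TV bound \eqref{eq:wtvbound} into the discrete analogues of \eqref{eq:aprioribounds}, then conclude by compensated compactness --- which is precisely the strategy outlined in Section~\ref{sec:tecno}, with the detailed Young-measure reduction deferred to \cite{Fjo13,FMT12b} just as you defer it. Your own caveat about reducing the Young measure to a Dirac mass from the single square entropy (and the related point that Kruzkov uniqueness really wants the full family of entropies, cf.\ the paper's closing remark on generalizing to arbitrary $\eta$) is exactly where the substantive work of the cited reference lies, so the proposal is a faithful, correctly hedged reconstruction.
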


\begin{remark}
With some extra effort, the above computation can be generalized from the square entropy $v^2$ to arbitrary entropies $\eta(v)$. See the review article by Tadmor \cite{Tad16} (see also \cite{Tad03,FMT12b}) for more information on so-called entropy stable methods.
\end{remark}

\section{ENO stability properties}\label{sec:enostab}
In this section we review the currently known stability properties of the ENO reconstruction method. In Section \ref{sec:easyprops} we summarize some immediate (but nevertheless useful) properties of the ENO reconstruction. In Section \ref{sec:signprop} we prove the \textit{sign property} of the ENO method, and in Section \ref{sec:upperjmpbound} we prove an upper bound on the jump $\jmpr{v}=v_\iphf^+-v_\iphf^-$. We discuss the ENO TV conjecture in Section \ref{sec:enoconjecture}. Recall from Section \ref{sec:convhighorder} that all of these properties are essential for the convergence of the high-order TECNO schemes.

In Section \ref{sec:enomeshdep} we prove some well-known \textit{mesh dependent} properties of ENO. As it turns out, the sign property is a necessary ingredient in a rigorous proof of some of these properties. We conclude in Section \ref{sec:enodeficiency} by mentioning some deficiencies of ENO.

\subsection{Immediate properties}\label{sec:easyprops}

\subsubsection{Mesh invariance and linearity}\label{sec:enolinear}
Under the mapping $x \to a+bx$ for any $a\in\R$ and $b>0$, the reconstructed polynomial is $p_i(\frac{x-a}{b})$.
If $(\avgv_i)_{i\in\Z}$ is replaced by $(\alpha\avgv_i + \beta)_{i\in\Z}$ for any $\alpha,\beta\in\R$, then the ENO reconstruction $p_i(x)$ is replaced by $\alpha p_i(x)+\beta$.

\subsubsection{Discontinuity across cell edges}\label{sec:disccelledges}
As a rule of thumb, the ENO reconstruction $p=\recon(\avgv)$ is discontinuous \textit{at least} at every $k$th cell interface $x_\iphf$. To see this, note that neighboring cells with the same stencil index $s_i=s_{i+1}$ have the same reconstruction $p_i=p_{i+1}$ (and are thus continuous at $x_\iphf$), whereas if $s_i<s_{i+1}$ then $p_i\neq p_{i+1}$, and hence $p_i(x_\iphf)\neq p_{i+1}(x_\iphf)$ (except in very rare cases, such as when $v$ is itself a $(k-1)$th order polynomial). Since $s_i$ must change at least at every $k$th index $i$, this yields a discontinuity in $p$.

At points of discontinuity $x_\iphf$, the size of the jump $p_{i+1}(x_\iphf)-p_i(x_\iphf)$ is $O(\Dx^k)$ (see Section \ref{sec:signprop}). Note that the cell interface jump $p_{i+1}(x_\iphf)-p_i(x_\iphf)$ can---and often will---be zero even when $\avgv_{i+1}-\avgv_i\neq 0$.

\subsubsection{Uniform $k$th order accuracy}\label{sec:uniformacc}
Let $v\in C^\infty(\R)$ with primitive $V(x)$ defined in \eqref{eq:primitive}. Through a Taylor expansion of $V$ it is easy to see that the ENO reconstruction $p=\recon(\avgv)$ of $(\avgv_i)_{i\in\Z}$ is a $k$th order approximation of $v$. More specifically, $p_i$ satisfies the relation \eqref{eq:kthorderapprox} with an error term $|e(x)| \leq C\|\frac{d^k v}{dx^k}\|_{L^\infty}$ for some $C = C_k$. In each cell $\cell_i$, the error term $e(x)$ is continuous (but not Lipschitz continuous) with at least one zero. It is discontinuous only at those cell interfaces $x_\iphf$ where $p$ is discontinuous (see Section \ref{sec:disccelledges}).

\subsection{The sign property}\label{sec:signprop}
Consider a reconstruction procedure $\recon$, mapping a collection of cell averages $(\avgv_i)_{i\in\Z}$ to a piecewise polynomial function $\sum_i p_i\ind_{\cell_i}$. As before, define the cell interface values $v_\iphf^- = p_i(x_\iphf)$ and $v_\iphf^+ = p_{i+1}(x_\iphf)$ and the jump $\jmpr{v}_\iphf = v_\iphf^+-v_\iphf^-$. We say that $\recon$ satisfies the \emph{sign property} if for every $i\in\Z$,
\begin{equation}\label{eq:signpropexact}
\begin{split}
\text{if}\quad \jmp{\avgv}_\iphf > 0\quad & \text{then}\quad \jmpr{v}_\iphf \geq 0 \\
\text{if}\quad \jmp{\avgv}_\iphf < 0\quad & \text{then}\quad \jmpr{v}_\iphf \leq 0 \\
\text{if}\quad \jmp{\avgv}_\iphf = 0\quad & \text{then}\quad \jmpr{v}_\iphf = 0.
\end{split}
\end{equation}
As we have seen in Section \ref{sec:convhighorder}, the sign property implies that the diffusion coefficient in finite volume schemes of the form \eqref{eq:fvm}, \eqref{eq:entrstabflux} has the right sign. 

\begin{figure}
\centering
\subfigure[$k=2$]{\includegraphics[width=0.32\linewidth]{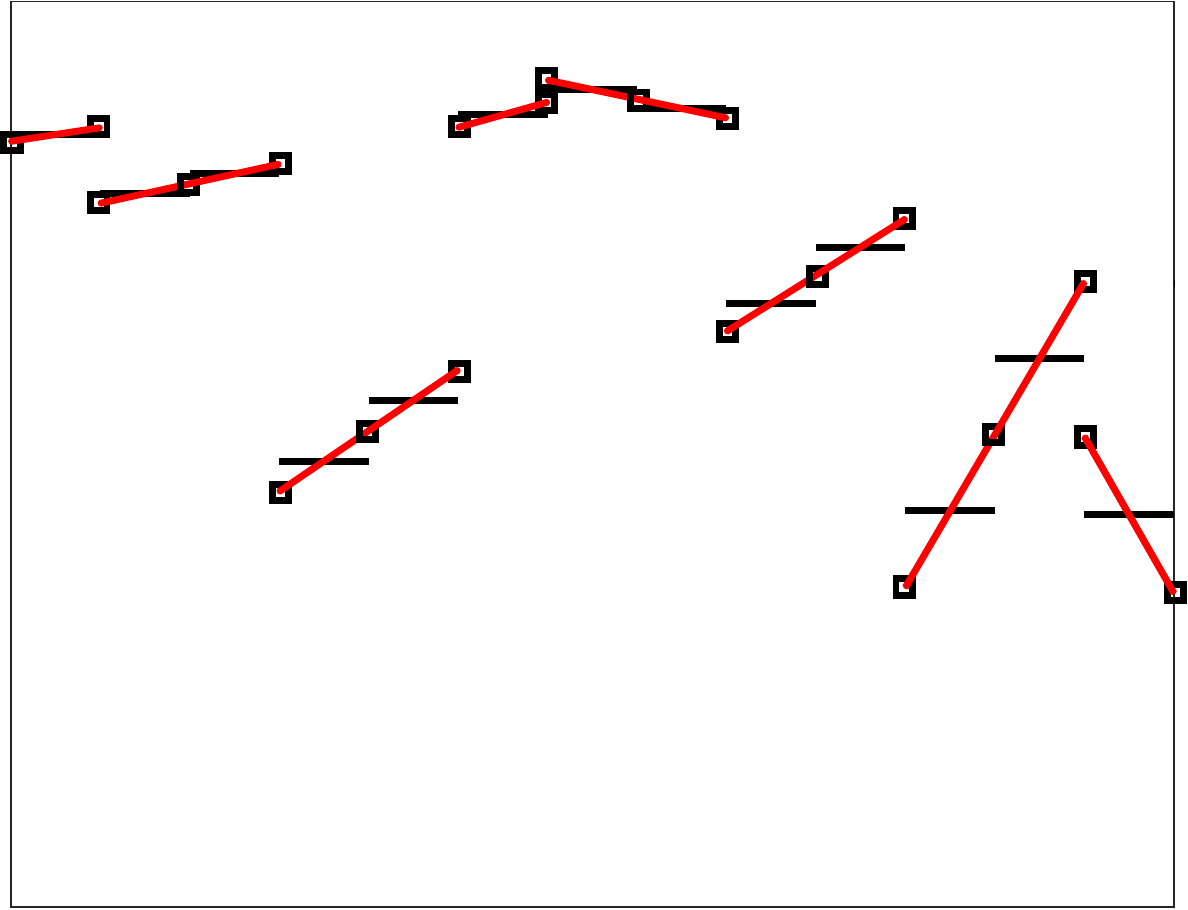}}
\subfigure[$k=3$]{\includegraphics[width=0.32\linewidth]{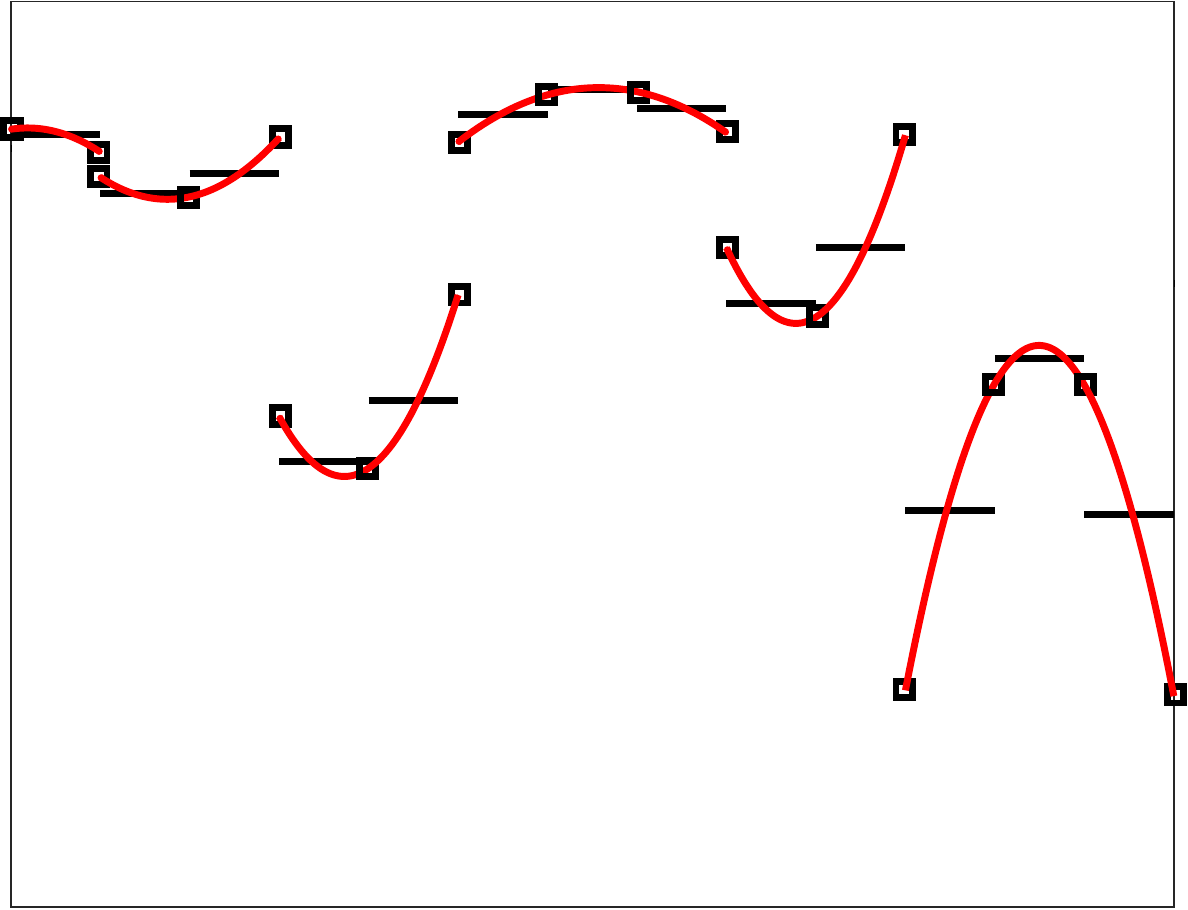}}
\subfigure[$k=4$]{\includegraphics[width=0.32\linewidth]{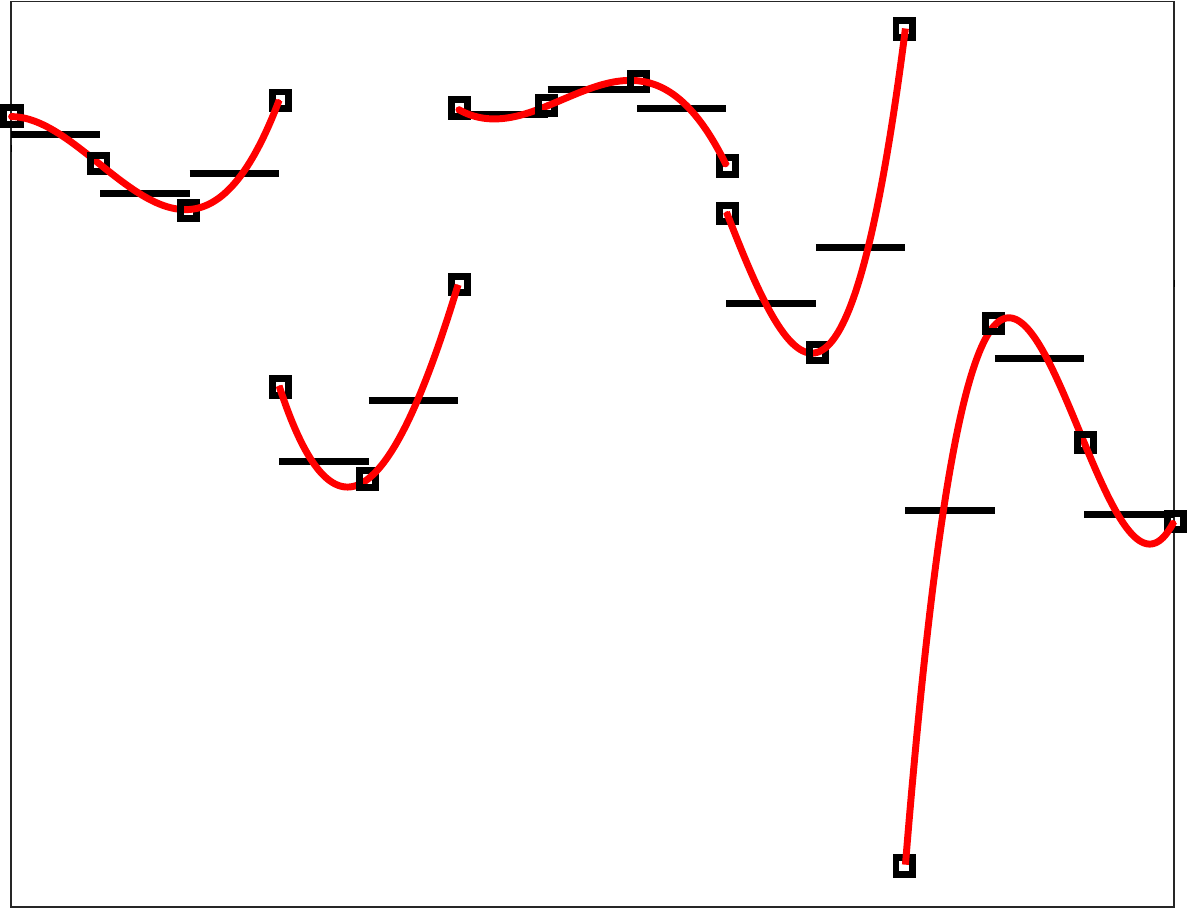}}
\caption{ENO reconstruction of randomly chosen cell averages. Black lines: cell averages. Red curves: reconstruction. Squares: cell interface values.}
\label{fig:eno}
\end{figure}
The sign property is illustrated in Figure \ref{fig:eno}, which shows a third-, fourth- and fifth-order ENO reconstruction of randomly chosen cell averages. Even though the reconstructed polynomial may have large variations within each cell, its jumps at cell interfaces always have the same sign as the jumps of the cell averages.

In \cite{Fjo13,FMT12a} it was shown that the $k$th order ENO reconstruction satisfies the sign property, for any $k\in\N$ and for any mesh $(x_\iphf)_{i\in\Z}$. We provide here a sketch of the proof.

\begin{proof}[Proof of ENO sign property (sketch)]
The first step is to derive the following expression for the jump in reconstructed values:
\begin{equation}\label{eq:jmpexpr}
\jmpr{v}_\iphf = \sum_{s=s_i}^{s_{i+1}-1} [\avgv_s,\dots,\avgv_{s+k}] X_{i,s}
\end{equation}
where
\[
X_{i,s} := (x_{s+k+\hf}-x_{s-\hf}) \prod_{\substack{m=0\\m\neq i-s}}^{k-1}(x_\iphf - x_{s+m+\hf}).
\]
When $s_i=s_{i+1}$, i.e.\ the neighboring stencils are the same, then \eqref{eq:jmpexpr} yields $\jmpr{v}_\iphf=0$ and the reconstruction is continuous across $x_\iphf$. Observe that \eqref{eq:jmpexpr} expresses $\jmpr{v}_\iphf$ in terms of only $k$th order divided differences of $\avgv$, instead of divided differences of order $1, \dots, k-1$, as one might expect from \eqref{eq:enoexpression}. In particular, when $k=1$ we get $\jmpr{v}_\iphf = [\avgv_i,\avgv_{i+1}](x_{i+\thf}-x_\imhf) = \jmp{v}_\iphf$, as expected.

The proof of \eqref{eq:jmpexpr} amounts to a simple manipulation of Newton polynomials, but the idea is quite clear: Both $v_\iphf^-$ and $v_\iphf^+$ are $k$th order approximations of $v(x_\iphf)$, with truncation terms of the form $[\avgv_s,\dots,\avgv_{s+k}] = \frac{1}{k!}\frac{d^k v}{dx^k}(\xi)$.

The next step is to show that each summand in \eqref{eq:jmpexpr} has the same sign as $\jmp{\avgv}_\iphf$. Because $\sgn( X_{i,s}) = (-1)^{s+k+1}$, we need only to show that
\begin{equation}\label{eq:signdivdiff}
\jmp{\avgv}_\iphf [\avgv_s,\dots,\avgv_{s+k}] (-1)^{s+k+1} \geq 0 \qquad \forall\ s=s_i^k,\dots,s_{i+1}^k-1.
\end{equation}
The proof of \eqref{eq:signdivdiff} is obvious for $k=1$. Assume that \eqref{eq:signdivdiff} holds for some $k\geq1$. It suffices to consider the case $\jmp{\avgv}_\iphf > 0$, so we have 
\[
[\avgv_s,\dots,\avgv_{s+k}] (-1)^{s+k+1} \geq 0 \qquad \text{for } s = s_i^k,\dots,s_{i+1}^k-1.
\]
The fact that $[\avgv_s,\dots,\avgv_{s+k+1}] (-1)^{s+k+2} \geq 0$ for $s = s_i^{k+1},\dots,s_{i+1}^{k+1}-1$ then follows by writing out the definition of these $(k+1)$th divided differences in terms of $k$th divided difference and using the induction hypothesis and the ENO choice of $s^{k+1}$. We refer to \cite{Fjo09,FMT12a} for the full proof.
\end{proof}

We emphasize that the sign property is mesh independent, in the sense that it holds for \textit{any} mesh $(x_\iphf)_{i\in\Z}$, regardless of the mesh width $\Dx_i$.

\begin{remark}
The ``point-value version'' of ENO (see Remark \ref{rem:pointeno}) also satisfies the sign property \eqref{eq:signpropexact}; see \cite{Fjo13,FMT12a}.
\end{remark}

\begin{remark}\label{rem:rdsignprop}
It is easy to confirm by numerical experiments that the ``RD'' (reconstruction with deconvolution) ENO method does \emph{not} satisfy the sign property. Indeed, Figure 3b of \cite{HEOC87}, which shows a fourth order RD ENO reconstruction, clearly violates the sign property at the fifth cell interface from the left.
\end{remark}

\subsection{Upper bound on jumps}\label{sec:upperjmpbound}
In \cite{Fjo13,FMT12a} it was shown that the ENO reconstruction procedure satisfies---in addition to the sign property---an upper bound on the jumps in the reconstructed polynomial. More precisely, for every $k\in\N$, the $k$th order ENO reconstruction satisfies
\begin{equation}\label{eq:upperbound}
0 \leq \frac{\jmpr{v}_\iphf}{\jmp{\avgv}_\iphf} \leq C_{k,i} \qquad \forall\ i\in\Z,
\end{equation}  
where $C_{k,i}$ depends only on $k$ and on the ratios $|\cell_j|/|\cell_\ell|$ of neighboring cell sizes. (Note that the first inequality in \eqref{eq:upperbound} is merely a restatement of the sign property \eqref{eq:signpropexact}.) Recall from Section \ref{sec:tecno} that his bound ensures Lipschitz continuity of the numerical flux \eqref{eq:entrstabflux}.

In the case of a uniform mesh, $|\cell_i|\equiv$ const., the constant $C_{k,i}\equiv C_k$ can be computed explicitly; see Table \ref{tab:upperBound}.
\begin{table}
\begin{center}
\begin{tabular}{|c|c|}
\hline $k$ & Upper bound $C_k$ \\
\hline 1 & 1 \\
\hline 2 & 2 \\
\hline 3 & $10/3 = 3.333\dots$ \\
\hline 4 & $16/3 = 5.333\dots$ \\
\hline 5 & $128/15 = 8.533\dots$ \\
\hline 6 & $208/15 = 13.866\dots$ \\
\hline
\end{tabular}
\end{center}
\caption{The upper bound in \eqref{eq:upperbound} for a uniform mesh.}
\label{tab:upperBound}
\end{table}
By way of an example, it was also found that the upper bound \eqref{eq:upperbound} is sharp. Indeed, if
\[
\avgv_i = \begin{cases} 
0 & \text{if $i$ is odd} \\
1 & \text{if $i$ is even and $i\leq 4$} \\
1-\eps & \text{if $i$ is even and $i > 4$.}
\end{cases}
\]
for any $\eps>0$, then the upper bound in \eqref{eq:upperbound} is attained in the limit $\eps\to0$. Figure \ref{fig:worstcase} shows these worst-case scenarios for $k=2,3,4,5$ and $\eps=10^{-10}$.
\begin{figure}
\centering
\includegraphics[width=0.42\linewidth]{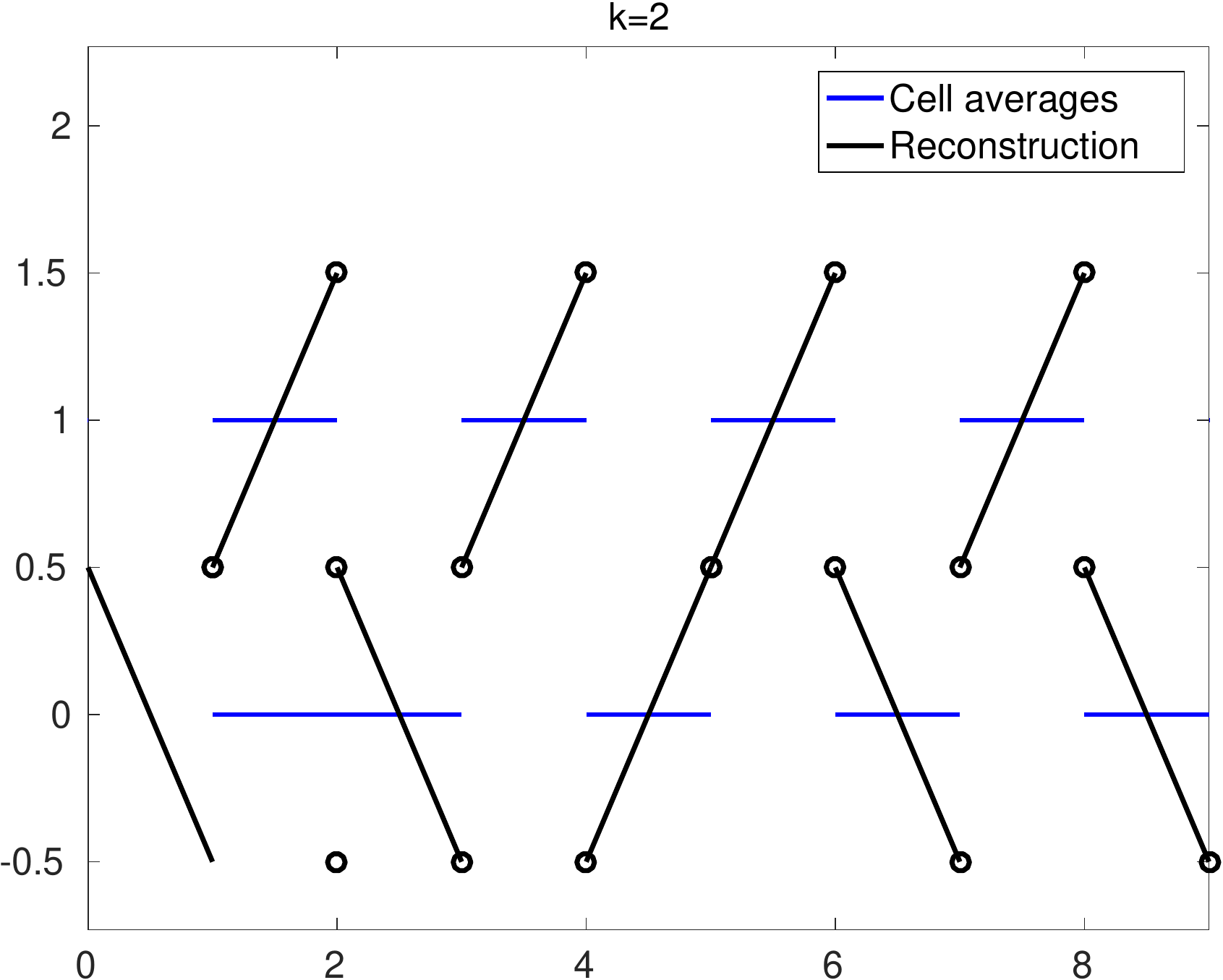}\hspace{2em}
\includegraphics[width=0.42\linewidth]{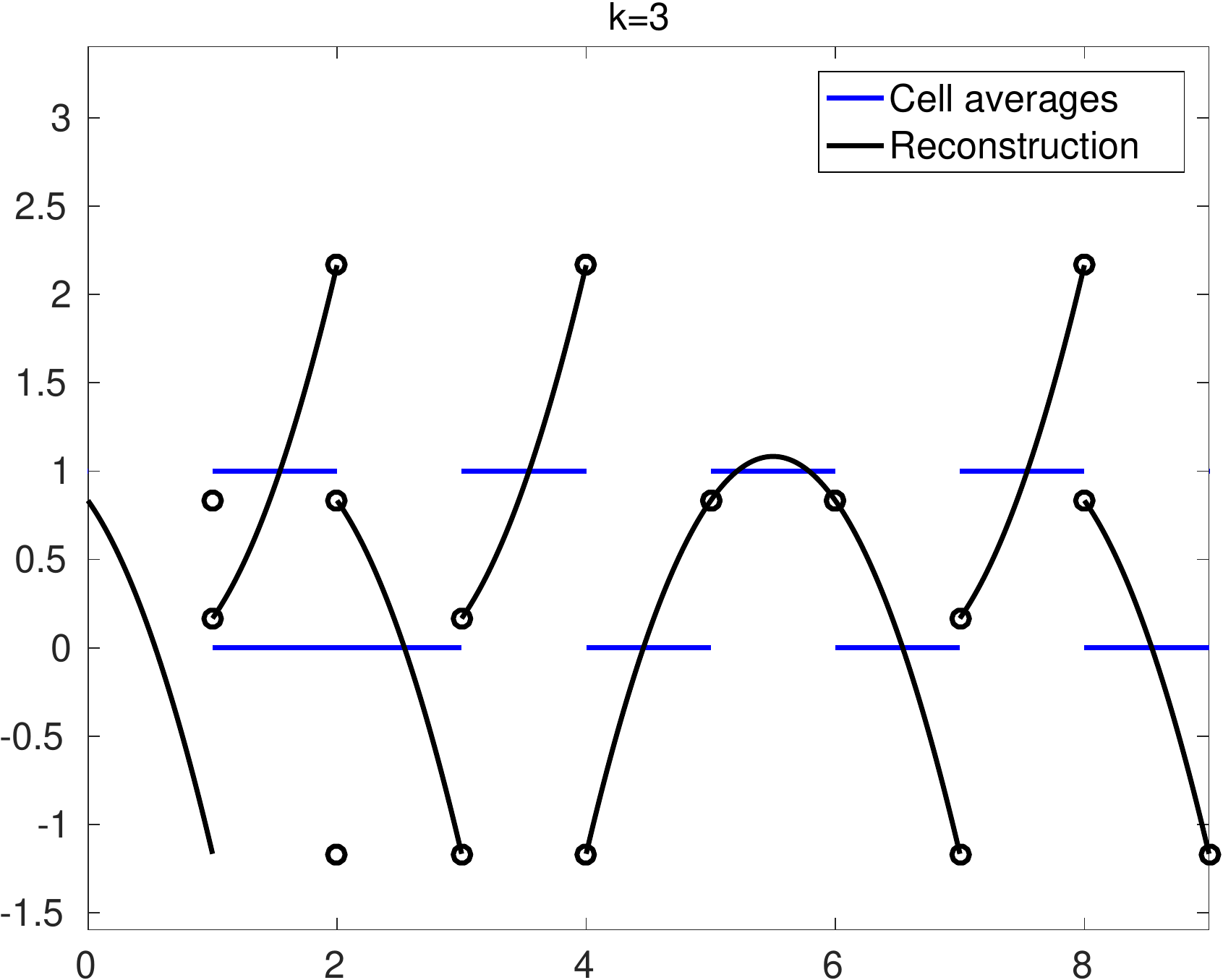} \\ \vspace{1em}
\includegraphics[width=0.42\linewidth]{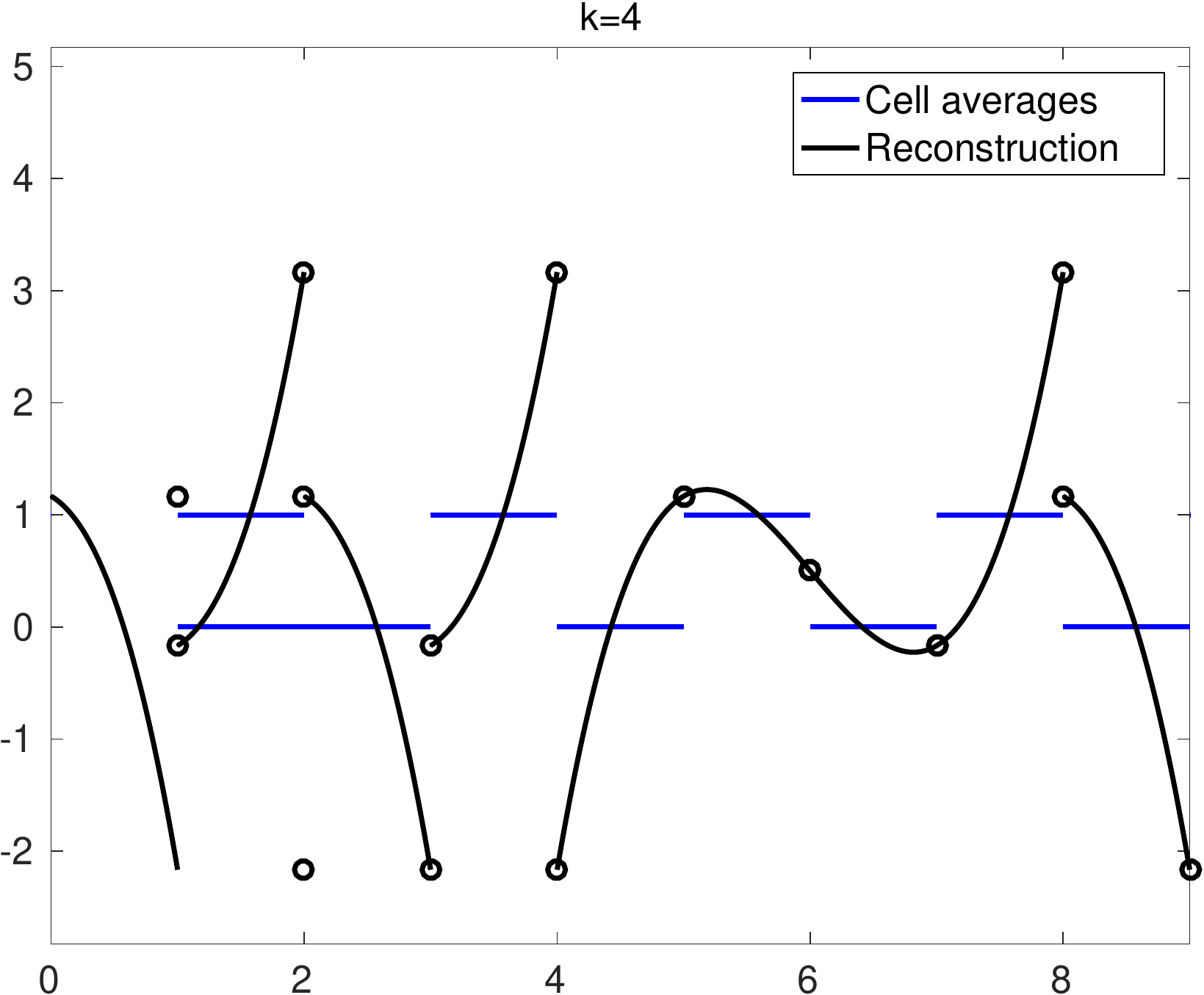}\hspace{2em}
\includegraphics[width=0.42\linewidth]{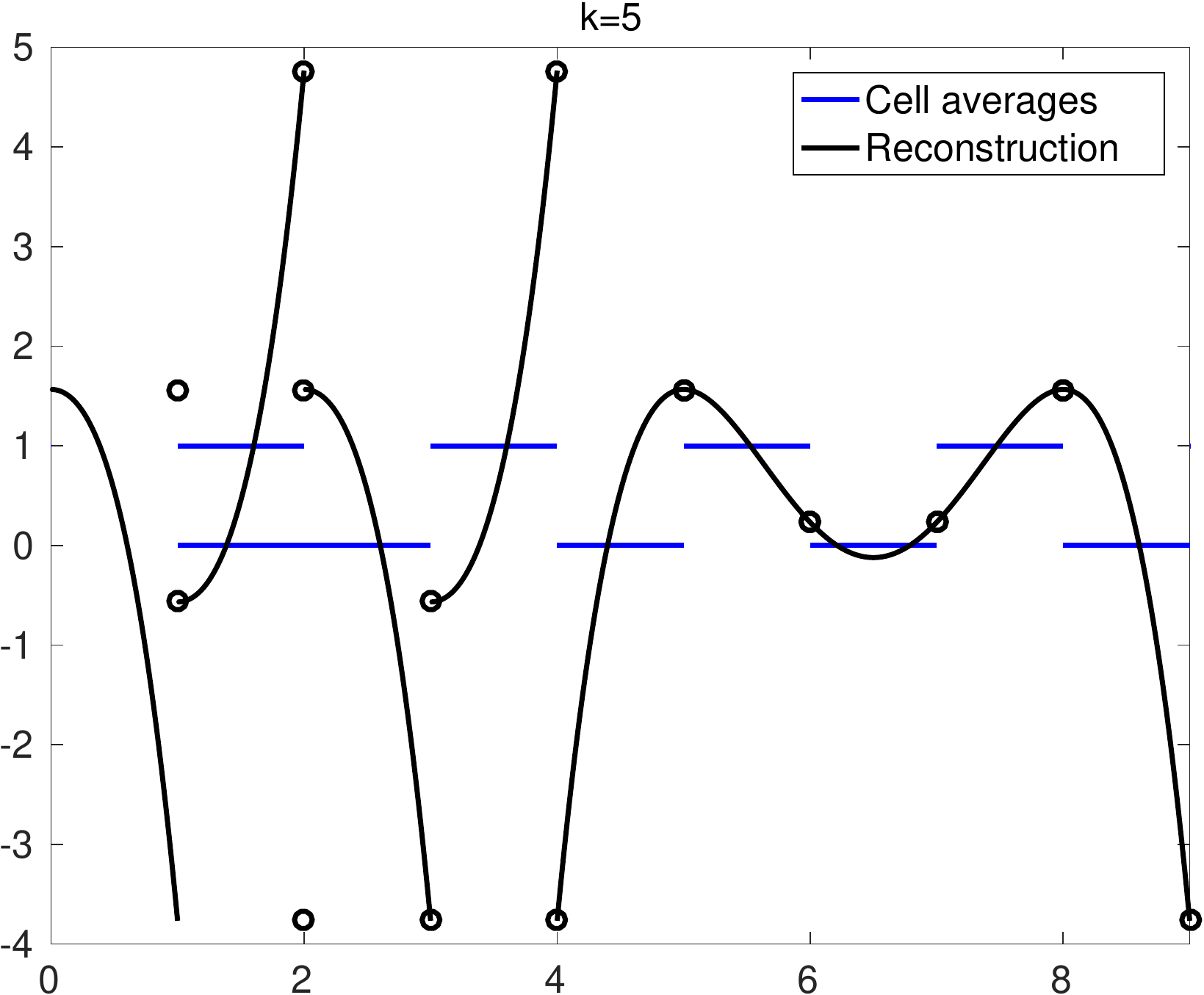}
\caption{Worst case cell interface jumps for $k=2, 3, 4, 5$.}
\label{fig:worstcase}
\end{figure}

\subsection{The ENO TV conjecture}\label{sec:enoconjecture}
Any compactness argument for numerical approximations of the conservation law \eqref{eq:cl} requires some ``weak TV bound'' of the form \eqref{eq:wtvbound}. To conclude such a bound on the basis of the available ``weak reconstructed TV bound'' \eqref{eq:wrecontvbound}, it would seem that a lower bound of the form $|\jmpr{v}_\iphf| \geq |\jmp{\avgv}_\iphf|$ for all $i$ is required. However, such a bound is impossible due to the possibility that $\jmpr{v}_\iphf=0$ even when $\jmp{\avgv}_\iphf \neq 0$ (see Section \ref{sec:disccelledges}).

In \cite{Fjo13} the following inequality was conjectured for the $k$th order ENO reconstruction method:
\begin{equation}\label{eq:tecnodiffusionbound}
\sum_{i\in\Z}|\jmp{\avgv}_\iphf|^{k+1} \leq C\|\avgv\|_{L^\infty}^{k-1}\sum_{i\in\Z}\jmp{\avgv}_\iphf\jmpr{v}_\iphf
\end{equation}
for some $C>0$ independent of $\avgv$ and $\Dx$. Clearly, if this were to hold then the ``weak reconstructed TV bound'' \eqref{eq:wrecontvbound}, together with an $L^\infty$ bound on $\avgv$, would imply \eqref{eq:wtvbound}. The only case for which \eqref{eq:tecnodiffusionbound} has been proven is for $k=2$, and we include the proof here. For the sake of simplicity we assume that the mesh is uniform.

\begin{proof}[Proof of \eqref{eq:tecnodiffusionbound} for $k=2$]
Denote $\Delta \avgv_i = \avgv_{i+1}-\avgv_i$, and iteratively $\Delta^k \avgv_i = \Delta^{k-1} (\Delta \avgv)_i$. The formula \eqref{eq:jmpexpr} yields
\begin{align*}
\sum_{i\in\Z}\jmp{\avgv}_\iphf\jmpr{v}_\iphf &= \sum_{i\in\Z}|\Delta \avgv_i| \sum_{j=s_i^2}^{s_{i+1}^2-1} a_i|\Delta^2 \avgv_j| \\*
&\geq a \sum_{i\in\Z}|\Delta \avgv_i| \sum_{j=s_i^2}^{s_{i+1}^2-1}|\Delta^2 \avgv_j|
\end{align*}
for constants $a_i \geq a > 0$ only dependent on $i$ and $s_i^2$. For every $j\in\Z$ there is precisely one index $i\in\Z$ such that $j\in \{s_i^2,\dots,s_{i+1}^2-1\}$, and we denote this index $i$ by $i=\iota_j^2$. Thus, we can write
\[
\sum_{i\in\Z}|\Delta \avgv_i| \sum_{j=s_i^2}^{s_{i+1}^2-1}|\Delta^2 \avgv_j| = \sum_{j\in\Z} |\Delta \avgv_{\iota_j^2}| |\Delta^2 \avgv_j|.
\]
It is straightforward to show that for $k=2$, the index $\iota$ is given by
\begin{equation}
\iota_j^2 = \begin{cases}
j & \text{if } |\Delta \avgv_j| > |\Delta \avgv_{j+1}| \\
j+1 & \text{if } |\Delta \avgv_j| \leq |\Delta \avgv_{j+1}|,
\end{cases}
\end{equation}
and as a consequence,
\begin{equation}\label{eq:iota2}
|\Delta \avgv_{\iota_j^2}| = \max\left(|\Delta \avgv_j|, |\Delta \avgv_{j+1}|\right).
\end{equation}
Starting with the left-hand side of \eqref{eq:tecnodiffusionbound} with $k=2$, we get
\begin{align*}
\sum_{i\in\Z} |\Delta \avgv_i|^3 &= \sum_{i\in\Z} |\Delta \avgv_i| \Delta \avgv_i \Delta \avgv_i \\
\text{\textit{(summation-by-parts)}}\qquad\qquad &= -\sum_{i\in\Z} \avgv_{i+1} \Delta\left(|\Delta \avgv_i|\Delta \avgv_i\right) \\
&= -\sum_{i\in\Z} \avgv_{i+1} \left(\left(\Delta|\Delta \avgv_i|\right)\Delta \avgv_i + |\Delta \avgv_i|\Delta^2 \avgv_i\right) \\
&\leq 2\sum_{i\in\Z} |\avgv_{i+1}||\Delta^2 \avgv_i||\Delta \avgv_i| \\
\text{\textit{(relabeling $i\mapsto j$ and using \eqref{eq:iota2})}}\qquad &\leq 2\|\avgv\|_{L^\infty} \sum_{j\in\Z} |\Delta^2 \avgv_j||\Delta \avgv_{\iota_j^2}|
\end{align*}
This completes the proof.
\end{proof}

\subsection{Mesh dependent properties}\label{sec:enomeshdep}
The ``mesh dependent properties'' of ENO are those properties which are satisfied asymptotically as $\Dx\to0$. In other words, for a \emph{fixed} underlying function $v(x)$, these are properties of ENO that are satisfied on \emph{sufficiently fine} meshes. Although these properties function as a proof-of-concept of the ENO reconstruction method, they are of limited value in applications to numerical methods for conservation laws \eqref{eq:cl} because for such applications, the cell averages in question will themselves depend (nonlinearly) on the mesh. As such, these properties cannot be used in a proof of stability or convergence of numerical schemes for \eqref{eq:cl}.

Below, we use the term ``shock'' to refer to any jump discontinuity of the underlying function $v$. For simplicity we will assume that $\Dx_i\equiv$ const.

\subsubsection{Uniform $k$th order accuracy up to discontinuities}\label{sec:uniformaccshock}
If $v$ is a {piecewise} $C^\infty$ function with finitely many jump discontinuities (``shocks''), then for \emph{sufficiently small $\Dx$}, the ENO reconstruction is a $k$th order approximation of $v$ in all cells not containing a shock \cite{HEOC86}. Indeed, if $\Dx$ is sufficiently small then there are at least $k$ cells in-between the shocked cells. Moreover, the $\ell$th divided difference $[\avgv_s,\dots,\avgv_{s+\ell}]$ over any stencil containing a shocked cell behaves as $O(\Dx^{-\ell})$. Thus, if $\Dx$ is small enough then in every non-shocked cell, the ENO stencil selection procedure can, and will, select an ENO stencil $\{s_i,\dots,s_i+k-1\}$ \textit{not} containing a shock. The property of uniform $k$th order accuracy then follows as in Section \ref{sec:uniformacc}.

\subsubsection{Monotonicity in shocked cells}\label{sec:monotonicity}
Harten et al.\ proved in \cite{HEOC86} that the primitive $P_i$ of the ENO reconstruction $p_i$ will be \emph{monotone} in every cell containing a discontinuity of $V$. This property is of limited value since \textit{(a)} the primitive $V$ is always continuous, and \textit{(b)} we are primarily interested in $p_i$, not $P_i$. However, it turns out that the same property in fact holds for the ENO reconstruction $p_i$ (see Figure \ref{fig:enoMonotone}).

\begin{figure}
\centering
\includegraphics[width=0.7\textwidth]{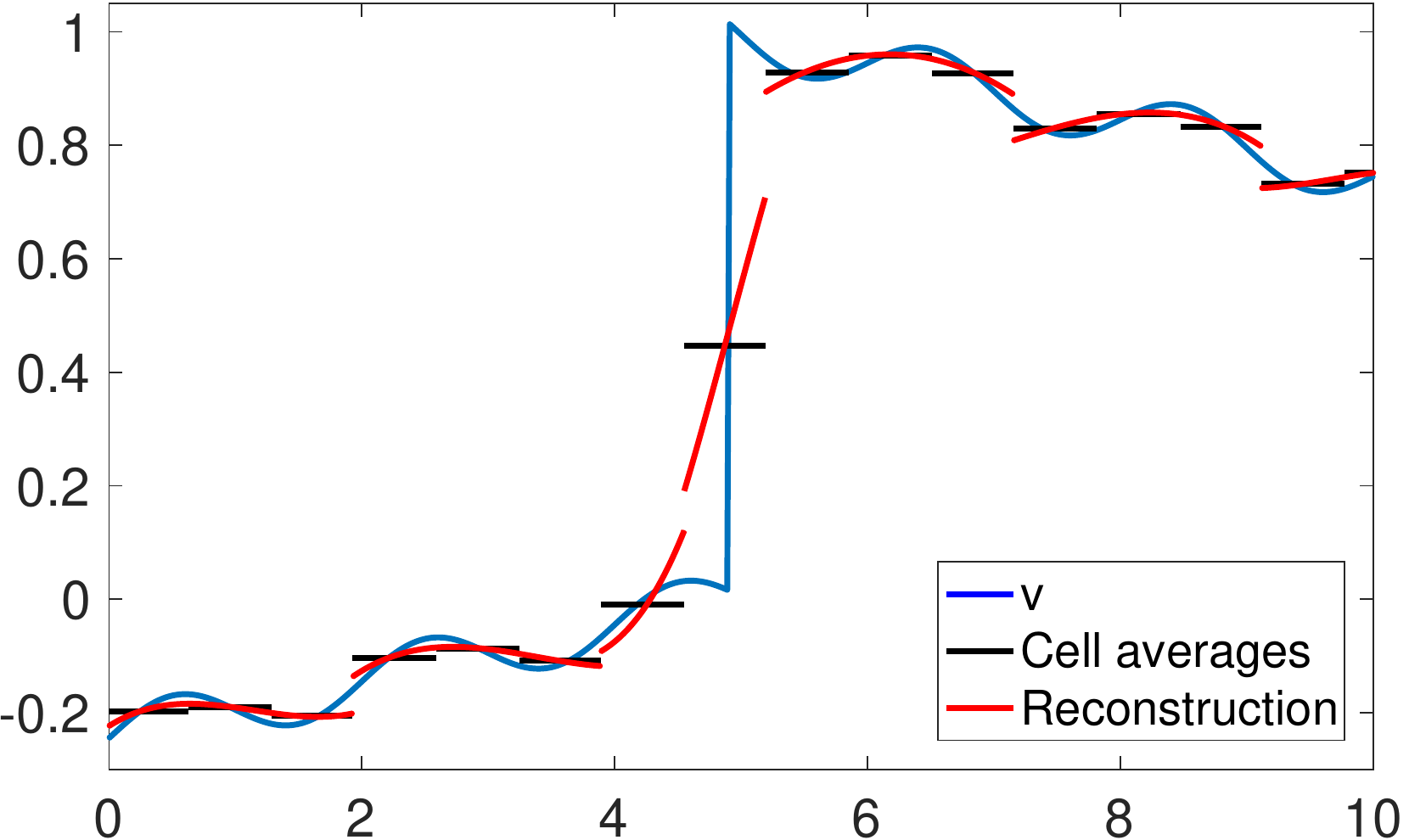}	
\caption{Monotonicity of fourth-order ENO reconstruction in a shocked cell.}
\label{fig:enoMonotone}
\end{figure}
\begin{proposition}\label{prop:monotonicity}
Let $v$ be a piecewise $C^\infty$ function with finitely many shocks. Then for sufficiently small $\Dx$, the ENO reconstruction of $v$ is monotone in every cell containing a jump discontinuity---more precisely, it is strictly increasing at positive jumps and strictly decreasing at negative jumps.
\end{proposition}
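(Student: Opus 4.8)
The plan is to localise near a single shock, rescale to a mesh-independent model problem, and reduce the claim to a convexity property of a polynomial interpolant of a truncated linear function.

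\textbf{Localisation and rescaling.} Fix a cell $\cell_{i_0}$ containing a shock at $x^{*}$; replacing $v$ by $-v$ if necessary (using the linearity of ENO, Section~\ref{sec:enolinear}) we may assume the jump $[v]:=v(x^{*+})-v(x^{*-})$ is positive. Since $v$ has only finitely many shocks, for $\Dx$ small $\cell_{i_0}$ is the only shocked cell among any $2k$ consecutive cells, so every candidate stencil $\{s,\dots,s+k-1\}\ni i_0$ for $p_{i_0}$ contains no shock besides $x^{*}$. Put $t=(x-x_{i_0-\hf})/\Dx$, so that $\cell_{i_0}$ becomes $[0,1)$; set $\theta=(x^{*}-x_{i_0-\hf})/\Dx\in(0,1)$, and let $\hat P$ be the degree-$k$ interpolant, over the rescaled stencil nodes (the consecutive integers $-(m-1),\dots,k-m+1$, where $m:=i_0-s_{i_0}+1$), of the normalised primitive $\Phi_{\Dx}(t):=\frac1\Dx\bigl(V(x_{i_0-\hf}+\Dx t)-V(x_{i_0-\hf})\bigr)=\int_0^{t}v(x_{i_0-\hf}+\Dx\tau)\,d\tau$. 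A short computation gives $p_{i_0}'(x_{i_0-\hf}+\Dx t)=\tfrac1\Dx\,\hat P''(t)$, so it suffices to prove $\hat P''>0$ on $[0,1]$.

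\textbf{The model problem.} Fix $\theta\in(0,1)$ and let $\Dx\to0$. Then $x_{i_0-\hf}\to x^{*}$, and for $\tau$ in the (bounded) range of rescaled nodes, $v(x_{i_0-\hf}+\Dx\tau)$ differs by $O(\Dx)$ from $v(x^{*-})$ when $\tau<\theta$ and from $v(x^{*+})$ when $\tau>\theta$, by smoothness of $v$ on each side of the shock; hence $\Phi_{\Dx}$ agrees, at every relevant integer node up to an $O(\Dx)$ error, with the piecewise-linear function $\mathcal V(t)=v(x^{*-})\,t+[v]\,(t-\theta)_{+}$. As for the stencil: at each level $\ell$ of Algorithm~\ref{alg:eno} the two divided differences compared for $p_{i_0}$ both scale like $\Dx^{-\ell}$ times a $\theta$-dependent prefactor---every candidate stencil contains $\cell_{i_0}$, hence straddles the kink---so for $\Dx$ small the choice is governed by the prefactors, which are precisely the divided differences of $\mathcal V$; thus the selected index $m$ equals the one the ENO procedure assigns to $\mathcal V$ on the unit-spaced mesh, at least when $\theta$ avoids the finitely many values at which such a comparison is an equality (handled by continuity, both admissible stencils then giving the same reconstruction on $[0,1]$). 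Since polynomial interpolation over fixed nodes is a bounded linear operation, $\hat P$ is therefore $O(\Dx)$-close---together with its second derivative---to $\mathcal P$, the ENO interpolant of $\mathcal V$ over $\{-(m-1),\dots,k-m+1\}$. The affine part of $\mathcal V$ is reproduced exactly and does not affect $\mathcal P''$, so $\mathcal P''=[v]\,G''$, where $G$ interpolates $g(t):=(t-\theta)_{+}$ over that stencil. As $[v]>0$ and strict positivity of $G''$ on the compact interval $[0,1]$ survives an $O(\Dx)$ perturbation, the proposition reduces to the following: \emph{for every $k\geq1$, every $\theta\in(0,1)$, and $m=m(k,\theta)$ the stencil index selected by the ENO procedure applied to $g$, one has $G''>0$ on $[0,1]$} (the interval $[0,1]$ being the cell that contains the kink $\theta$).

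\textbf{Reduction to a finite computation, and the main obstacle.} Writing $G$ in Newton form, and using that divided differences of $g$ over consecutive integers are finite differences of the sequence $\bigl(g(r)\bigr)_{r\in\Z}$, one obtains the Newton coefficients of $G$---hence $G''$ restricted to $[0,1]$---as explicit rational functions of $\theta$ and $m$; meanwhile the ENO rule fixes $m$ through a chain of threshold inequalities on $\theta$ coming from the level-by-level comparisons $\bigl|\,g[\text{extend left}]\,\bigr|$ versus $\bigl|\,g[\text{extend right}]\,\bigr|$ in Algorithm~\ref{alg:eno}. This is the crux, and the two halves cannot be decoupled: ``$G''>0$ on $[0,1]$'' is \emph{false} for the wrong stencil---already for $k=4$ and the centred stencil $m=2$, $G''$ changes sign on $[0,1]$ once $\theta$ is small enough---so the ENO selection must genuinely be invoked to exclude such stencils before the convexity check can succeed. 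This is where one expects the sign property of Section~\ref{sec:signprop}, or at least the sign identity \eqref{eq:signdivdiff} that underlies it, to enter: to pin down the selected stencil and to control the signs of the divided differences appearing in $G''$. Two further points then need care: verifying the convexity \emph{uniformly in $k$}, which seems to require either a $B$-spline-type nonnegativity representation of $G''$ or an induction on $k$ interleaved with the threshold analysis, rather than brute case work; and the degenerate regime $\theta\to0$ or $\theta\to1$ (shock very close to a cell interface), where $G''$ is only $O(\theta)$ on $[0,1]$, so the neglected $O(\Dx)$ corrections are no longer subordinate and strict monotonicity there needs a separate elementary estimate (or the standing assumption that the mesh positions the shock non-degenerately within its cell).
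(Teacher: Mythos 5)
There is a genuine gap. Your localisation and rescaling to the model problem of interpolating $\mathcal V(t)=v(x^{*-})t+[v](t-\theta)_+$ is a reasonable normalisation, but it transfers the entire content of the proposition into the unproven claim that $G''>0$ on $[0,1]$ for the ENO-selected stencil --- and you demonstrate yourself that this claim is \emph{false} for non-ENO stencils (e.g.\ $k=4$, centred stencil, small $\theta$), so nothing short of a genuine analysis of the ENO threshold inequalities can close it. You explicitly leave that analysis open (uniformity in $k$, the interaction between the Newton coefficients of $G$ and the stencil-selection inequalities, and the degenerate regimes $\theta\to0,1$). Two further points in the reduction itself are problematic. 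First, $\theta$ is not a fixed parameter: as $\Dx\to0$ the position of the shock within its cell varies with the mesh, so the passage to the limit must be uniform in $\theta\in(0,1)$, and your own observation that $G''=O(\theta)$ near the endpoints shows that the step ``fix $\theta$, let $\Dx\to0$, absorb $O(\Dx)$ perturbations'' does not survive without extra work. Second, the tie-breaking remark is incorrect: when the two compared divided differences of $\mathcal V$ coincide, the two admissible stencils do \emph{not} give the same reconstruction on $[0,1]$ in general; the selection is then decided by the $O(\Dx)$ corrections coming from the smooth part of $v$, and the convexity claim would have to be verified for both candidates.

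The paper avoids computing $G''$ altogether by a zero-counting argument. Writing $v=w+H$ with $w$ Lipschitz and $H$ a unit step, and correspondingly $p_i=q+G$ where $G$ interpolates the cell averages of $H$ over the ENO stencil, one notes that $G$ takes the value $0$ at some point $y_j$ of every stencil cell left of the shocked cell and the value $1$ at some point of every stencil cell to its right. Rolle's theorem then places $k-2$ (when the shocked cell sits at an end of the stencil) or $k-3$ (interior case) zeros of the degree-$(k-2)$ polynomial $\frac{dG}{dx}$ in intervals disjoint from the shocked cell. In the interior case the sign property, through the jump formula \eqref{eq:jmpexpr}, yields $G(x_{\imhf})>0=G(y_{i-1})$ and $G(x_{\iphf})<1=G(y_{i+1})$, which forces $\frac{dG}{dx}(y_{i\pm1})\geq0$; any zero of $\frac{dG}{dx}$ in $(y_{i-1},y_{i+1})$ would therefore have to occur with multiplicity two (or in a pair), exceeding the degree bound. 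Hence $\frac{dG}{dx}$ has no zero in the shocked cell, and since $\bigl\|\frac{dq}{dx}\bigr\|_{L^\infty}$ is bounded independently of $\Dx$ by the Lipschitz continuity of $w$, monotonicity of $p_i$ follows for small $\Dx$. This is precisely the mechanism that replaces the convexity computation you identify as the crux; to complete your route you would need an independent proof of that convexity statement, which is not supplied.
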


\begin{proof}
\newcommand{\is}{i}
The proof is similar in spirit to the proof of \cite[Theorem 4.1]{HEOC86}.

By choosing $\Dx$ sufficiently small, we may assume that shocked cells are at least $k$ cells from one another, and hence it suffices to consider the case $v=w+H$, where $w$ is Lipschitz continuous and $H$ is piecewise constant with a single jump discontinuity at $x=\bar{x}\in(x_{\is-\hf},x_{\is+\hf})$, for some index $\is\in\Z$. By the linearity of the ENO method (see Section \ref{sec:enolinear}), we may assume that 
\[
H(x) = \begin{cases}
0 & \text{if } x<\bar{x}\\ 
1 & \text{if } x>\bar{x}.
\end{cases}
\]
Moreover, we may assume that $k\geq3$ since the cases $k=1$ (piecewise constant reconstruction) and $k=2$ (piecewise linear reconstruction) are immediate.

Let $S = \{s_\is^k, \dots, s_\is^k+k-1\}$ denote the ENO reconstruction stencil in cell $\is$ and let $\cell=\bigcup_{j\in S} \cell_j$. We can write $p_\is = q + G$, where $q$ and $G$ are $(k-1)$th order polynomials which interpolate $(\avg{w}_j)_{j\in S}$ and $(\avg{H}_j)_{j\in S}$, respectively. Since $w$ is Lipschitz continuous we have $|[\avg{w}_j,\dots,\avg{w}_{j+\ell}]|\leq C \Dx^{-\ell}$ for all $j,\ell$, so from \eqref{eq:enoexpression} we get
\begin{equation}\label{eq:qlipschitz}
\left\|\frac{dq}{dx}\right\|_{L^\infty(\cell)} \leq C
\end{equation}
for some $C$ independent of $\Dx$.

Since $G$ interpolates $(\avg{H}_j)_{j\in S}$, there is at least one point $y_j \in \cell_j$ for every $j\in S$, $j\neq \is$ such that 
\[
G(y_j) = H(y_j) = \begin{cases}
0 & \text{if } j<\is \\
1 & \text{if } j>\is.
\end{cases}
\]
If there is more than one such root in cell $\cell_j$ we select the root $y_j$ which is closest to $\bar{x}$. By Rolle's theorem, the function $\frac{dG}{dx}$ has a zero in every interval of the form
\begin{equation}\label{eq:rolleintervals}
\begin{split}
(y_{j-1},y_j) \qquad &\text{for } s_\is < j < \is \\
(y_j, y_{j+1}) \qquad &\text{for } \is < j < s_\is+k-1.
\end{split}
\end{equation}
Note that cell $\cell_\is$ intersects none of the above intervals. We will show that $\frac{dG}{dx}$ cannot have a zero in $(y_{\is-1},y_{\is+1})\supset \cell_\is$. Choosing $\Dx$ small enough and using \eqref{eq:qlipschitz}, we can then conclude that also $p_\is=q+G$ must be monotone in $\cell_\is$.

We divide into two cases:

\noindent
\textbf{Case 1:} $s_\is \in \{\is, \is-k+1\}$, i.e.\ there are no cells in the stencil either to the left or to the right of $\cell_\is$. In this case there are exactly $k-2$ intervals of the form \eqref{eq:rolleintervals}. Since the $(k-2)$th order polynomial $\frac{dG}{dx}$ can have at most $k-2$ zeros, it cannot have another zero in $\cell_\is$.

\noindent
\textbf{Case 2:} $s_\is \notin \{\is, \is-k+1\}$. In this case there are exactly $k-3$ intervals of the form \eqref{eq:rolleintervals}. From the jump expression \eqref{eq:jmpexpr} and the sign property (see Section \ref{sec:signprop}), we get
\begin{align*}
(p_{\is+1}-p_\is)(x_{\is+\hf}) &= \sum_{s=s_\is}^{s_{\is+1}-1}[\avgv_s,\dots,\avgv_{s+k}]X_{\is,s} \\*
&\geq \sum_{s=s_\is}^{s_{\is+1}-1}|[\avgv_s,\dots,\avgv_{s+k}]|\Dx^k \\*
&\geq b_{\is+\hf}
\end{align*}
for some $b_{\is+\hf} > 0$ independent of $\Dx$. (Here, we have used the fact that $[\avgv_s,\dots,\avgv_{s+k}] \sim \Dx^{-k}$ for all $s\in\{\is-k,\dots,\is\}$). Similarly, we get
\[
(p_\is-p_{\is-1})(x_{\is-\hf}) \geq b_{\is-\hf}
\]
for some $b_{\is-\hf} > 0$ independent of $\Dx$. Thus,
\[
\begin{cases}
G(x_{\is-\hf}) = (p_\is-q)(x_{\is-\hf}) \geq b_{\is-\hf} + (p_{\is-1}-q)(x_{\is-\hf}) = b_{\is-\hf} + O(\Dx), \\
G(x_{\is+\hf}) = (p_\is-q)(x_{\is+\hf}) \leq -b_{\is+\hf} + (p_{\is+1}-q)(x_{\is+\hf}) = 1-b_{\is+\hf} + O(\Dx).
\end{cases}
\]
Choosing $\Dx$ small enough that the ``$O(\Dx)$'' terms are smaller than $b_{\is\pm\hf}$, we find that
\[
G(y_{\is-1}) = 0, \qquad G(x_{\is-\hf})>0, \qquad G(x_{\is+\hf})<1, \qquad G(y_{\is+1})=1,
\]
and hence,
\[
\frac{dG}{dx}(y_{\is-1}) \geq 0, \qquad \frac{dG}{dx}(y_{\is+1}) \geq 0.
\]
Thus, if $\frac{dG}{dx}$ has a zero in $(y_{\is-1},y_{\is+1})$, there must be at least two of them (or one zero with multiplicity at least 2). But the $(k-2)$th order polynomial $\frac{dG}{dx}$ already has $k-3$ zeros in the intervals \eqref{eq:rolleintervals}, so it cannot any zeros in $(y_{\is-1},y_{\is+1}).$
\end{proof}

\subsubsection{Essentially non-oscillatory}
The ``essentially non-oscillatory'' property, from which ENO derives its name, can be roughly stated as follows: Up to a term of order $\Dx^{k}$, the total variation of the ENO reconstruction $p$ is less than that of $v$. As with the monotonicity property, Harten et al.\ \cite{HEOC86} proved this only for the primitives $P$, $V$, not for the reconstruction $p$ itself. However, with Proposition \ref{prop:monotonicity} in place we can establish this result also for $p$.

\begin{theorem}
Assume that $v$ is piecewise $C^\infty$ with finitely many jump discontinuities. Then for sufficiently small $\Dx$, there exists a function $z=z(x)$ such that 
\[
z(x) = p(x) + O(\Dx^{k})\ \forall\ x, \qquad \TV(z) \leq \TV(v),
\]
where $p=\recon(\avgv)$ is the ENO reconstruction of $v$.
\end{theorem}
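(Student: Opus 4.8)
The plan is to build $z$ by leaving the ENO reconstruction $p$ untouched on the finitely many cells that contain a shock and replacing it by $v$ itself on every other cell, and then to verify the two required properties cell by cell. As in the proof of Proposition~\ref{prop:monotonicity}, the separation argument of Section~\ref{sec:uniformaccshock} lets us assume, for $\Dx$ small, that every shock of $v$ lies in the interior of a cell, that distinct shocked cells are at least $k$ cells apart, and hence that the two neighbours of a shocked cell are shock-free; by Section~\ref{sec:uniformaccshock}, ENO is then $k$th order accurate on every shock-free cell, so $p_i = v + O(\Dx^k)$ uniformly there, in particular on the cells adjacent to a shocked one. Let $i_0$ run over the indices of the shocked cells and set $z = p_{i_0}$ on each shocked cell $\cell_{i_0}$ and $z = v$ elsewhere. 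Then $z = p$ exactly on the shocked cells and $|z-p| = |v-p| = O(\Dx^k)$ off them, so the first assertion holds automatically, and it remains to bound $\TV(z)$.

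The second step is to reduce the TV estimate to one shock at a time. Writing $\jmp{z}_{j+\hf}$ for the jump $z(x_{j+\hf}^+) - z(x_{j+\hf}^-)$, and using that $v$ is continuous across every cell interface while $z$ is continuous across every interface between two shock-free cells, the decomposition of total variation into interior and interface parts gives
\[
\TV(v) = \sum_i \TV\bigl(v|_{\cell_i}\bigr), \qquad \TV(z) = \sum_i \TV\bigl(z|_{\cell_i}\bigr) + \sum_{i_0}\bigl(|\jmp{z}_{i_0-\hf}| + |\jmp{z}_{i_0+\hf}|\bigr),
\]
the last sum being over shocked cells. The shock-free cells contribute equally to both sides, so it suffices to prove, for each shocked cell,
\[
\TV\bigl(p_{i_0}|_{\cell_{i_0}}\bigr) + |\jmp{z}_{i_0-\hf}| + |\jmp{z}_{i_0+\hf}| \ \leq\ \TV\bigl(v|_{\cell_{i_0}}\bigr).
\]

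For the single-shock estimate, consider a shocked cell carrying a positive jump of $v$ (the negative case is symmetric). By Proposition~\ref{prop:monotonicity}, $p_{i_0}$ is strictly increasing on $\cell_{i_0}$, so $\TV(p_{i_0}|_{\cell_{i_0}}) = v_{i_0+\hf}^- - v_{i_0-\hf}^+$. At $x_{i_0-\hf}$, $z$ equals $v$ on the left and $p_{i_0}$ on the right, so $\jmp{z}_{i_0-\hf} = v_{i_0-\hf}^+ - v(x_{i_0-\hf})$; since $\cell_{i_0-1}$ is shock-free, $v(x_{i_0-\hf}) = v_{i_0-\hf}^- + O(\Dx^k)$, hence $\jmp{z}_{i_0-\hf} = \jmpr{v}_{i_0-\hf} + O(\Dx^k)$, and likewise $\jmp{z}_{i_0+\hf} = \jmpr{v}_{i_0+\hf} + O(\Dx^k)$. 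By the sign property $\jmpr{v}_{i_0\pm\hf} \geq 0$, and --- the crucial point --- the jump formula \eqref{eq:jmpexpr} shows that $\jmpr{v}_{i_0\pm\hf}$ is in fact bounded below by a positive constant independent of $\Dx$: every stencil in the defining sum straddles the shocked cell, so each summand is a $k$th divided difference of size $\sim \Dx^{-k}$ times a weight $X$ of size $\sim \Dx^k$, i.e.\ an $O(1)$ quantity carrying the sign dictated by the sign property. Thus, for $\Dx$ small, both $\jmp{z}_{i_0\pm\hf}$ are strictly positive, and --- $p_{i_0}$ being increasing --- the three quantities telescope:
\[
\bigl(v_{i_0-\hf}^+ - v(x_{i_0-\hf})\bigr) + \bigl(v_{i_0+\hf}^- - v_{i_0-\hf}^+\bigr) + \bigl(v(x_{i_0+\hf}) - v_{i_0+\hf}^-\bigr) = v(x_{i_0+\hf}) - v(x_{i_0-\hf}),
\]
which, the jump of $v$ being positive, equals $|v(x_{i_0+\hf}) - v(x_{i_0-\hf})| \leq \TV(v|_{\cell_{i_0}})$ by the two-point partition of the cell. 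Summing over all shocked cells gives $\TV(z) \leq \TV(v)$, and together with the first step this proves the theorem.

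The hard part is the uniform lower bound $\jmpr{v}_{i_0\pm\hf} \geq c > 0$ on the reconstructed jumps at the interfaces bounding a shocked cell --- everything else is bookkeeping plus the already available Proposition~\ref{prop:monotonicity} and sign property. It is exactly this bound that upgrades the conclusion from the easy $\TV(z) \leq \TV(v) + O(\Dx^k)$ to the \emph{exact} inequality, by preventing the $O(\Dx^k)$ truncation errors from reversing the sign of the jumps of $z$. Establishing it rigorously requires, as in the proof of Proposition~\ref{prop:monotonicity}, the reduction $v = w + H$ with $w$ smooth and $H$ a single Heaviside, an estimate on divided differences over shock-straddling stencils, and a little extra care in the degenerate case where a shock lies within $o(\Dx)$ of a cell interface, where the identification of the shocked cell and the blow-up rates of the relevant divided differences have to be re-examined exactly as in that proof.
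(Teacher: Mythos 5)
Your construction of $z$ and the three ingredients you invoke (uniform accuracy off the shocks, Proposition \ref{prop:monotonicity} in the shocked cells, the sign property at the two bounding interfaces) are exactly those of the paper's proof, and your telescoping bookkeeping is correct as far as it goes. The divergence---and the gap---is in how the $O(\Dx^k)$ discrepancy between $v$ and $p$ at the interfaces $x_{i_0\pm\hf}$ is prevented from spoiling the exact inequality. You reduce everything to a uniform lower bound $\jmpr{v}_{i_0\pm\hf}\geq c>0$ with $c$ independent of $\Dx$, defer its proof, and correctly flag it as the hard part. But this bound is not merely unproved; it fails in general. The sum \eqref{eq:jmpexpr} at, say, $x_{i_0+\hf}$ runs over $s=s_{i_0},\dots,s_{i_0+1}-1$, and when the shocked cell selects an extreme stencil ($s_{i_0}=i_0$ or $s_{i_0}=i_0-k+1$) one of the two interface sums collapses to a \emph{single} $k$th divided difference. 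A $k$th divided difference over a stencil containing the shocked cell is $O(\Dx^{-k})$ but need not be bounded \emph{below} by $c\,\Dx^{-k}$: on a uniform mesh, writing $\theta=(x_{i_0+\hf}-\bar x)/\Dx$ and $r=i_0-s$, the Heaviside contribution is proportional to $\theta\binom{k}{r}-\binom{k-1}{r}$, which vanishes at $\theta=(k-r)/k$ and is arbitrarily small nearby; since $\theta$ varies with $\Dx$ for a fixed $v$, no $\Dx$-independent $c$ exists. (When the sum has at least two terms, at most one of them can degenerate at a time, which is why the analogous bound in Case 2 of the proof of Proposition \ref{prop:monotonicity} does hold---but that case explicitly excludes the extreme stencils.) Without the lower bound your argument only yields $\TV(z)\leq\TV(v)+O(\Dx^k)$, i.e., the statement of the Remark following the Theorem rather than the Theorem itself.

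The paper avoids needing any lower bound: it uses only the sign property $\jmpr{v}_{i_0\pm\hf}\geq0$ and then performs an $O(\Dx^{k})$ modification of $z$ near the two interfaces so that the interface jumps of $z$ acquire the correct sign. Concretely, if $\jmp{z}_{i_0-\hf}=\jmpr{v}_{i_0-\hf}+O(\Dx^k)$ comes out negative at a positive shock, it is negative only by $O(\Dx^k)$, and one may for instance replace $p_{i_0}$ by $\max\bigl(p_{i_0},\,v(x_{i_0-\hf})\bigr)$ near the left edge of $\cell_{i_0}$: monotonicity of $p_{i_0}$ guarantees this is an $O(\Dx^k)$ change that makes $z$ continuous there without increasing its variation on the cell, after which your telescoping closes the proof. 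I recommend adopting this modification step in place of the lower bound; the alternative---handling the extreme-stencil, degenerate-$\theta$ configurations directly---is substantially more work than ``a little extra care''.
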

\begin{proof}
Let $\Dx$ be sufficiently small that $p(x) = v(x) + O(\Dx^k)$ in all non-shocked cells (see Section \ref{sec:uniformaccshock}). Decrease $\Dx$ further such that $p$ is monotone in all shocked cells (see Section \ref{sec:monotonicity}). We choose $z(x)=v(x)$ in non-shocked cells, and $z(x)=p(x)$ in shocked cells. After an $O(\Dx^{k})$ modification near the interfaces $x_\iphf$ between shocked and non-shocked cells, the sign property implies that the total variation does not increase at these points.
\end{proof}

\begin{remark}
Although the above theorem says nothing about $\TV(p)$, it may be shown that $\TV(p) \leq \TV(v) + O(\Dx^k)$ for sufficiently small $\Dx$.
\end{remark}

\subsection{ENO deficiencies}\label{sec:enodeficiency}
Despite satisfying numerous stability properties, the ENO reconstruction method suffers from some deficiencies which makes it less attractive for certain applications such as numerical methods for linear conservation laws.

\subsubsection{$\recon$ is discontinuous}
The ENO reconstruction $\recon : (\avgv_i)_{i\in\Z} \to p$ is discontinuous, in the sense that a small change in $\avgv_i$ (such as round-off errors) can change the switch in the ENO stencil selection procedure, thus producing a different reconstruction $p_j$. Although this stencil switching might not be a problem in practice, the discontinuous nature of ENO-based methods makes their analysis significantly more difficult.

\subsubsection{Inefficient use of information}
Although the final ENO reconstruction $p_i$ in a cell only relies on $k$ values, the ENO stencil selection procedure depends on all $2k-1$ neighboring points. This is an inefficient use of information; using all $2k-1$ points would potentially give up to $(2k-1)$th order accuracy in smooth parts of the solution. This situation is exacerbated in multiple dimensions. 

The WENO method uses a much more compact interpolation stencil and might therefore be more suitable for multi-dimensional problems.

\subsubsection{Instabilities in linear problems}
\begin{figure}
\centering
\includegraphics[width=0.31\textwidth]{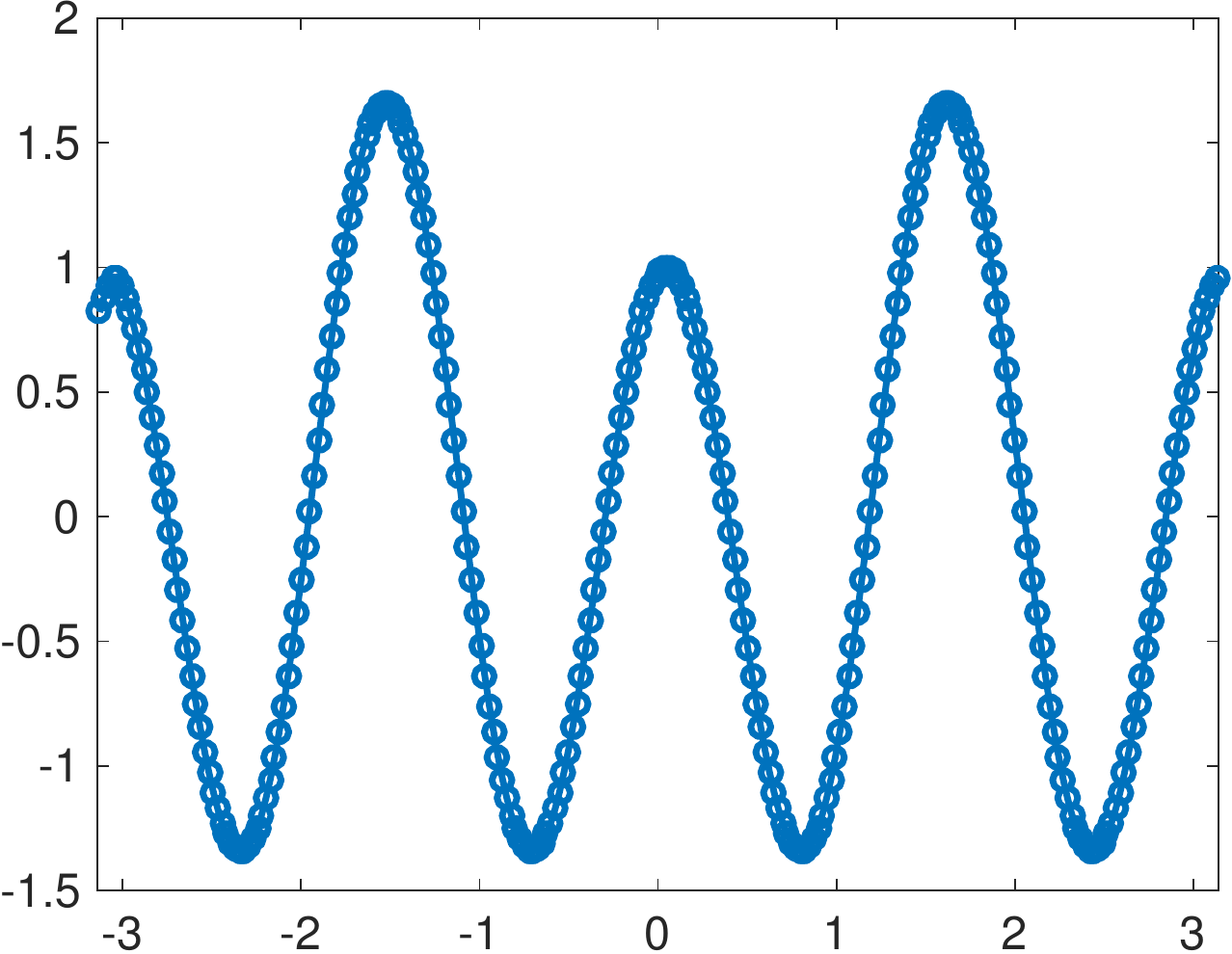} \hspace{1ex}
\includegraphics[width=0.31\textwidth]{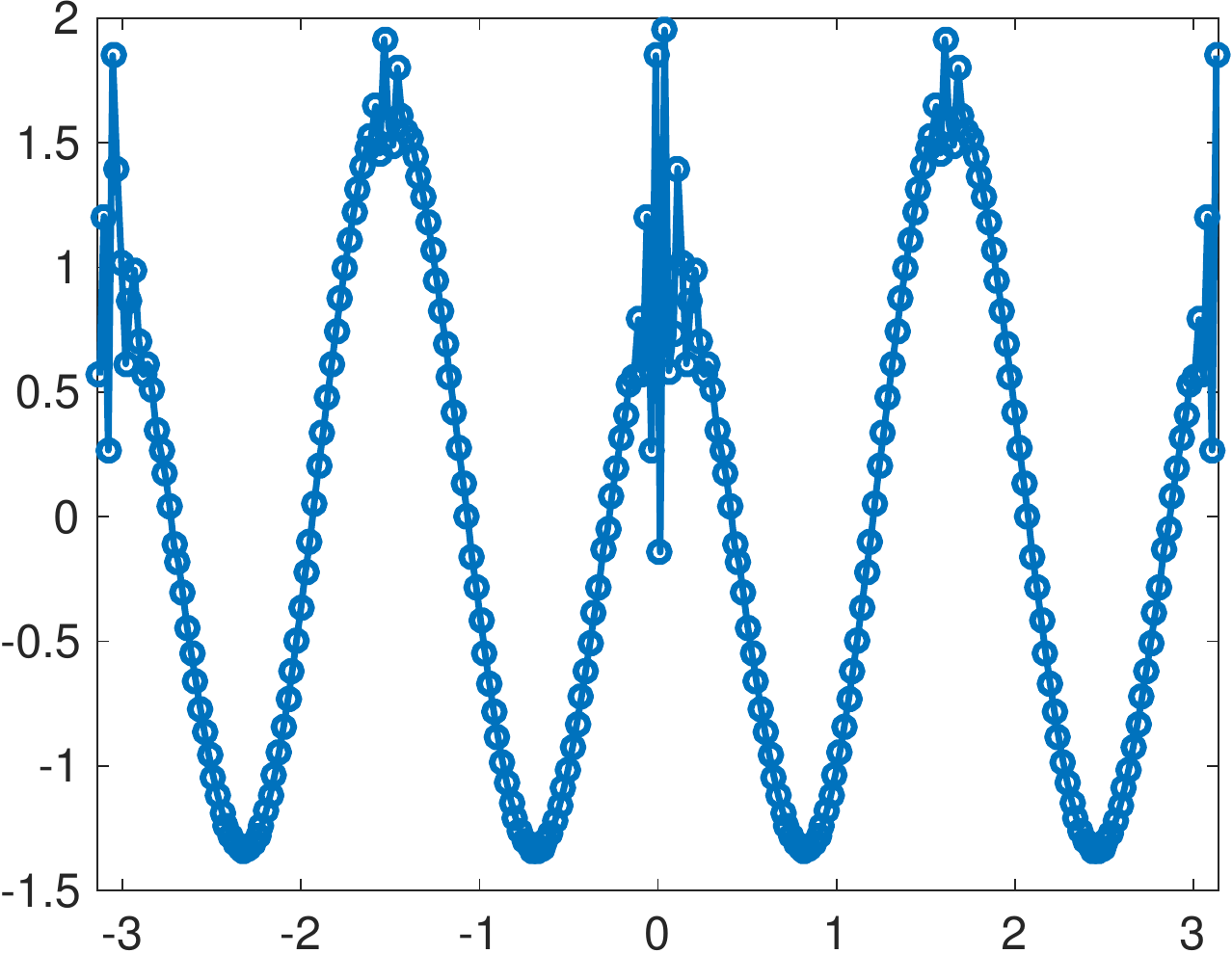} \hspace{1ex}
\includegraphics[width=0.30\textwidth]{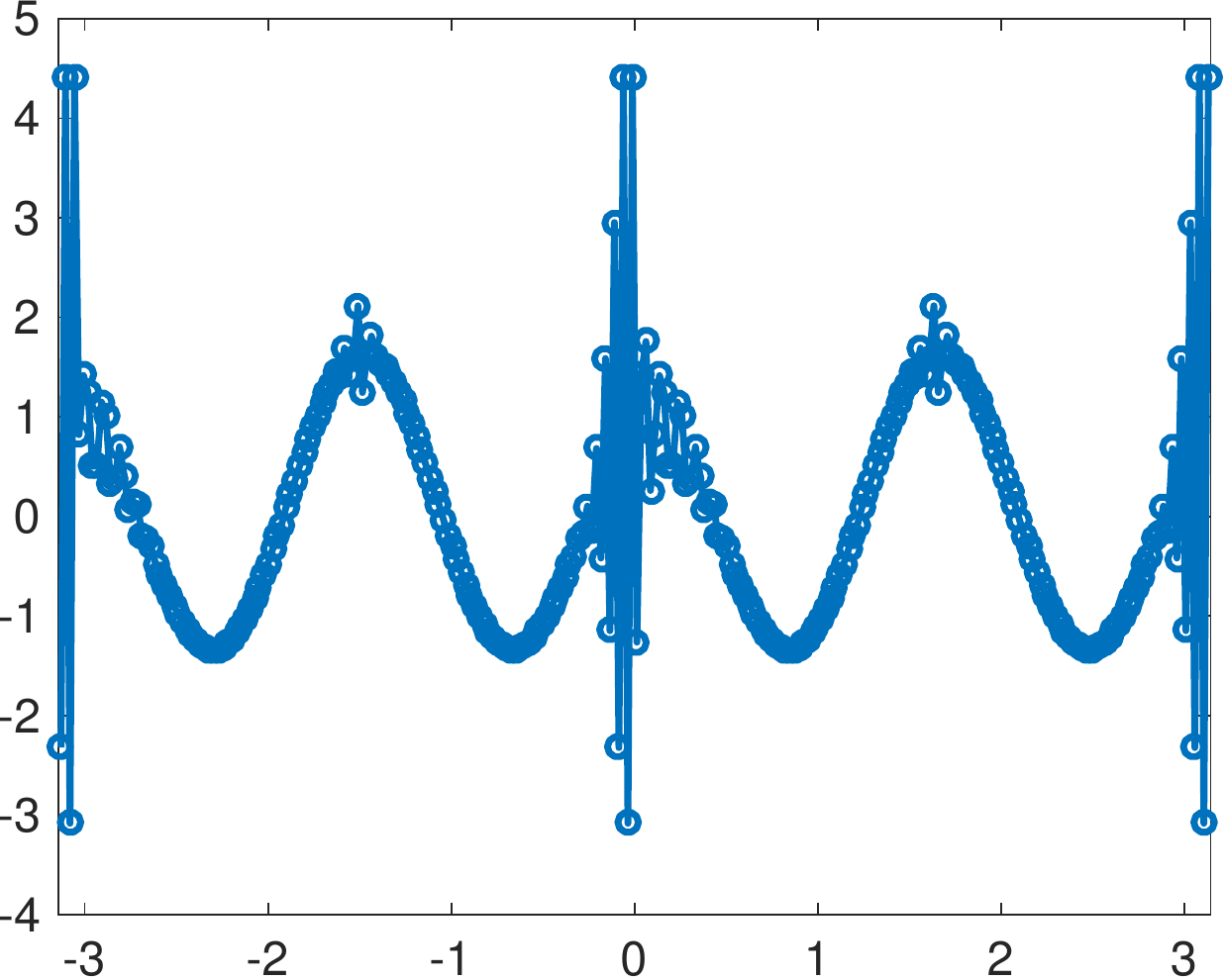} \\ \vspace{1em}
\includegraphics[width=0.3\textwidth]{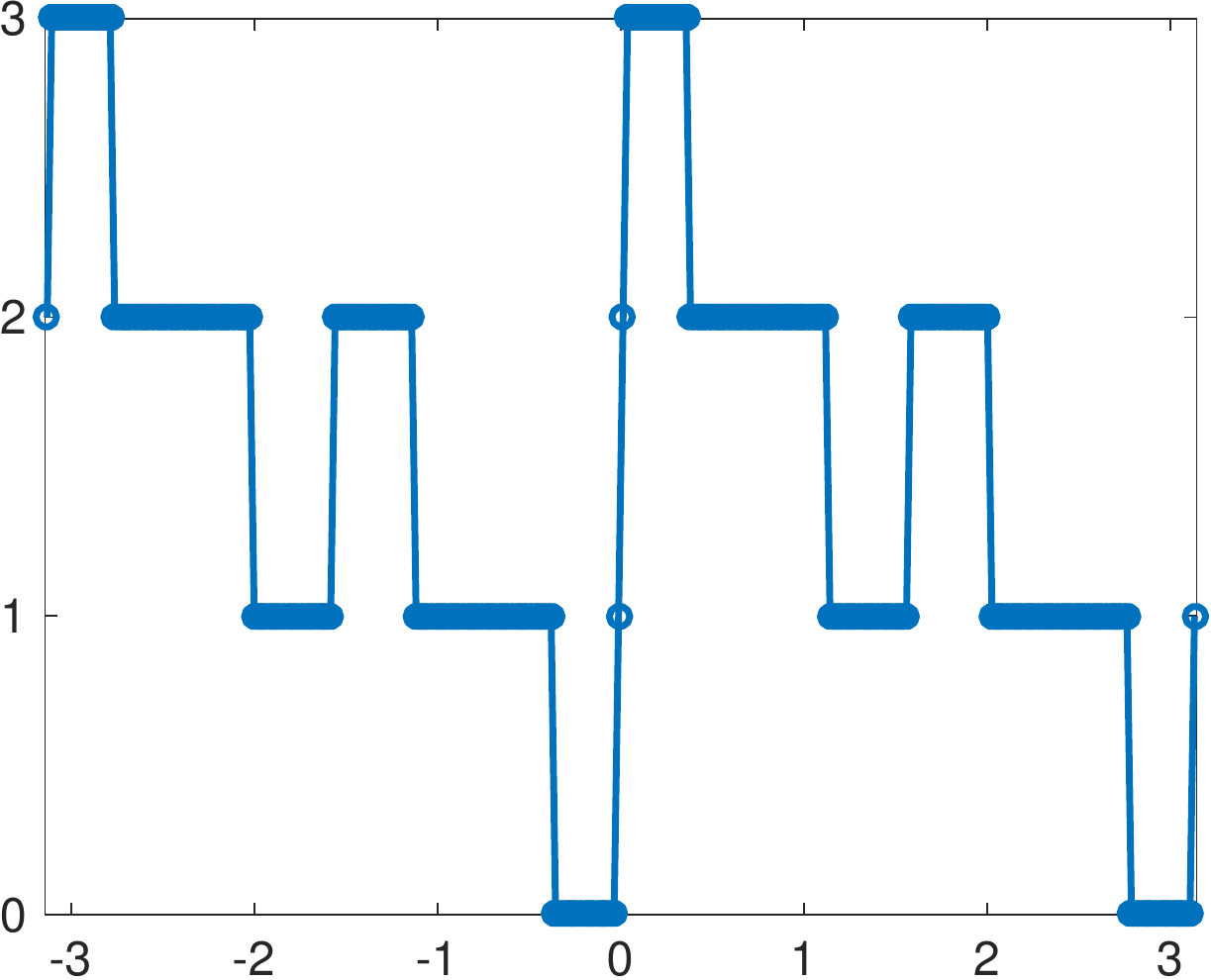} \hspace{1ex}
\includegraphics[width=0.3\textwidth]{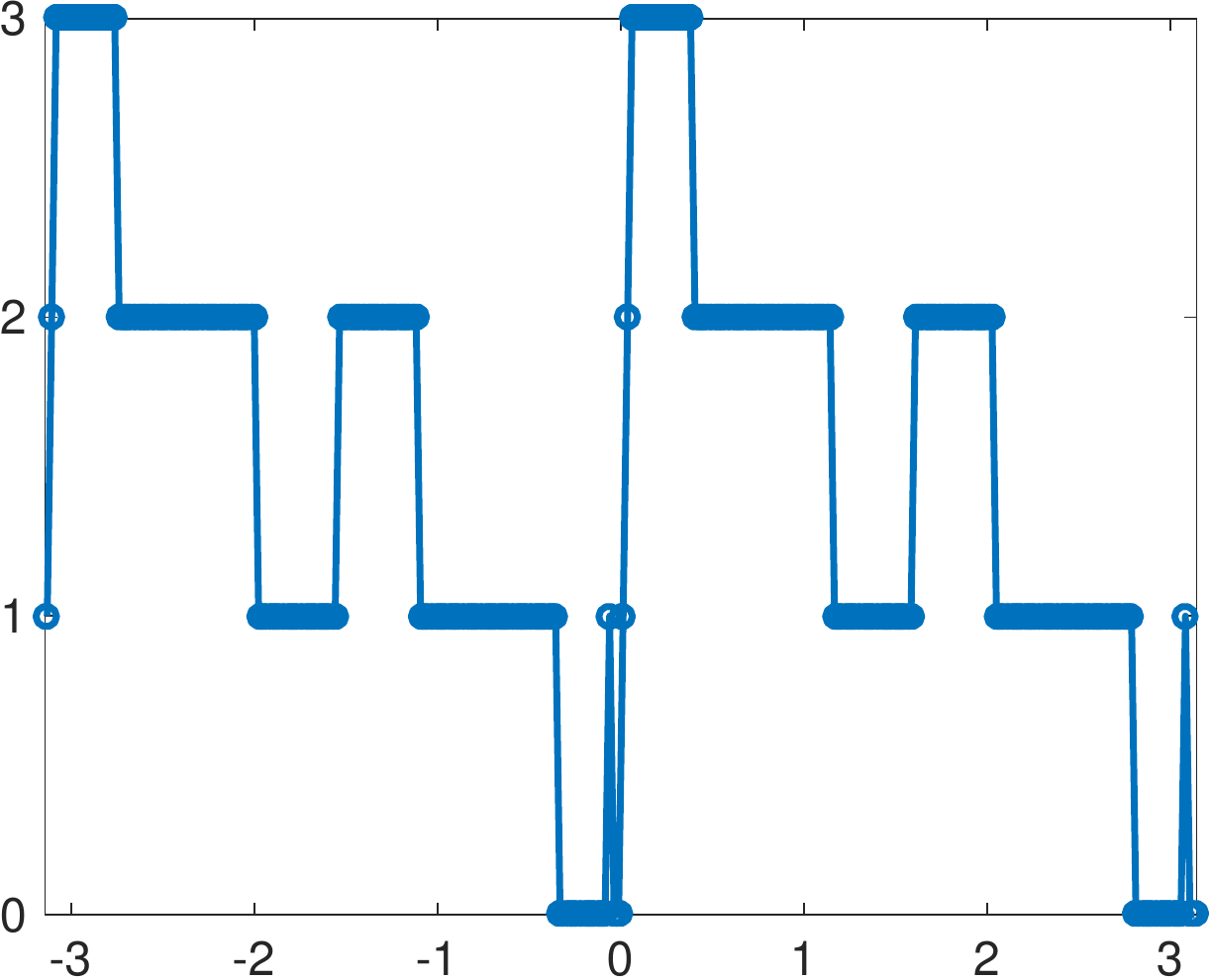} \hspace{1ex}
\includegraphics[width=0.3\textwidth]{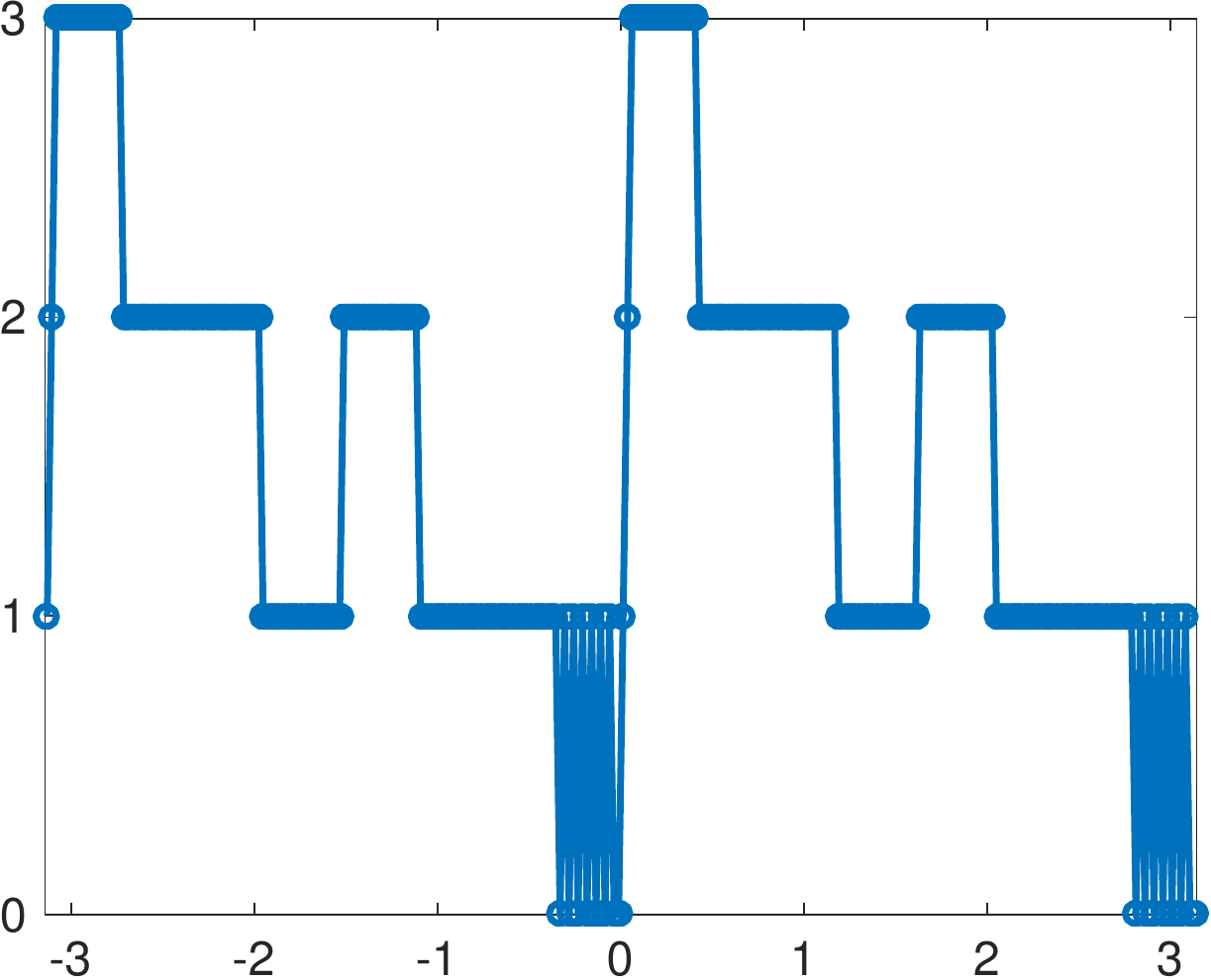}
\caption{Fourth-order divided differences (top row) and stencil offset $r_i$ (bottom row) at $t=0$ (left), $t\approx 0.02$ (middle) and $t\approx 0.04$ (right).}
\label{fig:sin4}
\end{figure}
Rogerson and Meiburg \cite{RM90} reported on a series of numerical experiments with an ENO-based fourth-order finite difference schemes for the periodic linear advection equation
\[
\partial_t u + \partial_x u = 0, \qquad x\in[-\pi,\pi).
\]
They observed the expected fourth-order convergence rate with $u_0(x) = \sin(x)$, but with $u_0(x)=\sin(x)^4$ they observed a decay in the convergence rate at moderately high values of $N$, the number of meshpoints. 

We approximate the above initial value problem using a Godunov-type finite volume scheme with fourth-order ENO reconstruction and a fourth-order Runge-Kutta time integrator. (Rogerson and Meiburg computed with the so-called ENO-Roe method \cite{SO89}, but the problem persists in other variants of ENO method and hence seems to be inherent to the ENO reconstruction procedure.) Figure \ref{fig:sin4} (top row) shows the fourth-order divided difference $[\avgv_i,\dots,\avgv_{i+4}]$ at various times. High-frequency oscillations appear quickly at the critical points $x=0$, $x=\pm\frac{\pi}{2}$, $x=\pm\pi$, and over time these oscillations propagate into the lower-order divided differences, finally polluting the solution $\avgv_i$. The oscillations near $x=\pm\frac{\pi}{2}$ stay bounded, whereas the oscillations near $x=0$, $x=\pm\pi$ grow unboundedly. 

Figure \ref{fig:sin4} (bottom row) shows the \textit{stencil offset} $r_i^k = i -s_i^k \in \{0,\dots,k-1\}$. (Recall from Section \ref{sec:disccelledges} that every interface $x_\iphf$ where $r_{i+1}\geq r_i$ will have a discontinuity in the reconstruction, which might lead to larger truncation errors.) Near the oscillatory points $x=0$, $x=\pm\pi$, the ENO method selects the stencils  $r_i^3=0$ and $r_i^3 = 3$. Rogerson and Meiburg \cite{RM90} call these stencils \textit{linearly unstable}: setting  $r_i^3 \equiv 0$ or $\equiv3$ for all $i$ will give an unconditionally unstable, divergent scheme, whereas $r_i\equiv1$ or $\equiv2$ gives a stable, convergent scheme. Although this heuristic explanation might very well be the root of the problem, the nonlinear nature of ENO makes this problem very hard to analyze rigorously. Further discussion can be found in \cite[Section 5]{AL93}, \cite{Har87b} and \cite{Shu90}. We mention in closing that the WENO method does not exhibit these instabilities for this particular problem \cite{Shu97}.

\section{Summary}
The ENO method has been enormously influential in the numerics community for hyperbolic conservation laws. Despite of its highly nonlinear (even discontinuous) nature, it yields expressions and formulas which are rather easy to analyze, and enjoys several surprising properties such as the non-oscillatory property, the sign property and upper bounds on discontinuities. As discussed in Section \ref{sec:enodeficiency}, certain ENO-based finite volume methods suffer from instabilities which prevent convergence. A rigorous analysis of this problem would be highly interesting (not to mention difficult), and might lead to provably stable ENO-type methods.


\begin{thebibliography}{10}

\bibitem{Abg94}
R.~Abgrall.
\newblock {On Essentially Non-oscillatory Schemes on Unstructured Meshes:
  Analysis and Implementation}.
\newblock {\em Journal of Computational Physics}, 114:45--58, September 1994.

\bibitem{AL93}
R.~Abgrall and F.~C. Lafon.
\newblock {ENO schemes on unstructured meshes}.
\newblock Technical Report RR-2099, INRIA, 1993.

\bibitem{ACDDM08}
F.~Arandiga, A.~Cohen, R.~Donat, N.~Dyn, and B.~Matei.
\newblock {Approximation of piecewise smooth functions and images by
  edge-adapted {(ENO-EA)} nonlinear multiresolution techniques}.
\newblock {\em Applied and Computational Harmonic Analysis}, 24:225--250, 2008.

\bibitem{AD00}
F.~Ar{\`a}ndiga and R.~Donat.
\newblock {Nonlinear multiscale decompositions: The approach of A. Harten}.
\newblock {\em Numerical Algorithms}, 23(2):175--216, 2000.

\bibitem{Fjo09}
U.~S. Fjordholm.
\newblock {Structure preserving finite volume methods for the shallow water
  equations}.
\newblock Master's thesis, University of Oslo, 2009.

\bibitem{Fjo13}
U.~S. Fjordholm.
\newblock {\em {High-order accurate entropy stable numerical schemes for
  hyperbolic conservation laws}}.
\newblock PhD thesis, ETH Z{\"u}rich, 2013.
\newblock No. 21025.

\bibitem{FMT12b}
U.~S. Fjordholm, S.~Mishra, and E.~Tadmor.
\newblock {Arbitrarily high order accurate entropy stable essentially
  non-oscillatory schemes for systems of conservation laws}.
\newblock {\em SIAM Journal on Numerical Analysis}, 50:544--573, 2012.

\bibitem{FMT12a}
U.~S. Fjordholm, S.~Mishra, and E.~Tadmor.
\newblock {{ENO} Reconstruction and {ENO} Interpolation Are Stable}.
\newblock {\em Foundations of Computational Mathematics} 13(2):139--159, 2012.

\bibitem{GR91}
E.~Godlewski and P.-A. Raviart.
\newblock {\em {Hyperbolic systems of conservation laws}}.
\newblock Ellipses, 1991.

\bibitem{GST01}
S.~Gottlieb, C.-W. Shu, and E.~Tadmor.
\newblock {High order time discretization methods with the strong stability
  property}.
\newblock {\em SIAM Review}, 43:89--112, 2001.

\bibitem{Har87b}
A.~Harten.
\newblock {On the Nonlinearity of Modern Shock-Capturing Schemes}.
\newblock In {\em {Wave Motion: Theory, Modelling, and Computation}}, volume~7
  of {\em {Mathematical Sciences Research Institute Publications}}, pp
  147--201. Springer US.

\bibitem{Har83}
A.~Harten.
\newblock {On the symmetric form of systems of conservation laws with entropy}.
\newblock {\em Journal of Computational Physics}, 49(1):151--164, 1983.

\bibitem{Har86}
A.~Harten.
\newblock {On High-Order Accurate Interpolation for Non-Oscillatory Shock
  Capturing Schemes}.
\newblock In {\em {Oscillation Theory, Computation, and Methods of Compensated
  Compactness}}, volume~2 of {\em {The IMA Volumes in Mathematics and Its
  Applications}}, pp. 71--105. 1986.

\bibitem{Har89}
A.~Harten.
\newblock {{ENO} schemes with subcell resolution}.
\newblock {\em Journal of Computational Physics}, 83(1):148--184, 1989.

\bibitem{Har96}
A.~Harten.
\newblock {Multiresolution Representation of Data: A General Framework}.
\newblock {\em SIAM Journal on Numerical Analysis}, 33(3):1205--1256, 1996.

\bibitem{HC91}
A.~Harten and S.~R. Chakravarthy.
\newblock {Multi-dimensional ENO schemes for general geometries}.
\newblock Technical Report 91-76, ICASE, 1991.

\bibitem{HEOC86}
A.~Harten, B.~Engquist, S.~Osher, and S.~R. Chakravarthy.
\newblock {Some results on uniformly high-order accurate essentially
  nonoscillatory schemes}.
\newblock {\em Applied Numerical Mathematics}, 2:347--377, 1986.

\bibitem{HEOC87}
A.~Harten, B.~Engquist, S.~Osher, and S.~R. Chakravarthy.
\newblock {Uniformly high order accurate essentially non-oscillatory schemes,
  {III}}.
\newblock {\em Journal of Computational Physics}, 71(2):231--303, 1987.

\bibitem{HO87}
A.~Harten and S.~Osher.
\newblock {Uniformly High-Order Accurate Nonoscillatory Schemes, I}.
\newblock {\em SIAM Journal on Numerical Analysis}, 24(2):279--309, 1987.

\bibitem{JS96}
G.-S. Jiang and C.-W. Shu.
\newblock {Efficient implementation of weighted {ENO} schemes}.
\newblock {\em Journal of Computational Physics}, 126(1):202--228, 1996.

\bibitem{LOC94}
X.-D. Liu, S.~Osher, and T.~Chan.
\newblock {Weighted Essentially Non-oscillatory Schemes}.
\newblock {\em Journal of Computational Physics}, 115(1):200--212, 1994.

\bibitem{RM90}
A.~M. Rogerson and E.~Meiburg.
\newblock {A numerical study of the convergence properties of ENO schemes}.
\newblock {\em Journal of Scientific Computing}, 5(2):151--167, 1990.

\bibitem{Shu90}
C.-W. Shu.
\newblock {Numerical experiments on the accuracy of {ENO} and modified {ENO} schemes}.
\newblock {\em Journal of Scientific Computing}, 5(2):127--149, 1990.

\bibitem{Shu97}
C.-W. Shu.
\newblock {Essentially Non-Oscillatory and Weighted Essentially Non-Oscillatory
  Schemes for Hyperbolic Conservation Laws},
\newblock in \textit{Advanced Numerical Approximation of Nonlinear Hyperbolic Equations}, B. Cockburn, C. Johnson, C.-W. Shu and E. Tadmor (Editor: A. Quarteroni), Lecture Notes in Mathematics, volume 1697, Springer-Verlag, Berlin, 1998, pp. 325-432.

\bibitem{SO88}
C.-W. Shu and S.~Osher.
\newblock {Efficient implementation of essentially non-oscillatory
  shock-capturing schemes}.
\newblock {\em Journal of Computational Physics}, 77(2):439--471, 1988.

\bibitem{SO89}
C.-W. Shu and S.~Osher.
\newblock {Efficient implementation of essentially non-oscillatory
  shock-capturing schemes, {II}}.
\newblock {\em Journal of Computational Physics}, 83(1):32--78, 1989.

\bibitem{SZ16}
C.-W. Shu and Y. Zhang.
\newblock \emph{Handbook of Numerical Methods for Hyperbolic Problems} vol. 17.

\bibitem{SKS97}
K.~Siddiqi, B.~B. Kimia, and C.-W. Shu.
\newblock {Geometric Shock-Capturing ENO Schemes for Subpixel Interpolation,
  Computation and Curve Evolution}.
\newblock {\em Graphical Models and Image Processing}, 59(5):278--301, 1997.

\bibitem{Swe84}
P.~K. Sweby.
\newblock {High Resolution Schemes Using Flux Limiters for Hyperbolic
  Conservation Laws}.
\newblock {\em SIAM Journal on Numerical Analysis}, 21(5):995--1011, 1984.

\bibitem{Tad03}
E.~Tadmor.
\newblock {Entropy stability theory for difference approximations of nonlinear
  conservation laws and related time dependent problems}.
\newblock {\em Acta Numerica}, 12:451--512, 2003.

\bibitem{Tad16}
E.~Tadmor.
\newblock \emph{Handbook of Numerical Methods for Hyperbolic Problems} vol. 17.

\end{thebibliography}

\end{document}